\newcommand{\R}{\mathbb{R}}{}
\newcommand{\N}{\mathbb{N}}
\newcommand{\Zp}{\mathbb{Z}^{+}}
\newcommand{\bS}{\mathbb{S}}
\newcommand{\cA}{\mathcal{A}}
\newcommand{\cB}{\mathcal{B}}
\newcommand{\cC}{\mathcal{C}}
\newcommand{\cH}{\mathcal{H}}
\newcommand{\cKL}{\mathcal{KL}}
\newcommand{\cG}{\mathcal{G}}
\newcommand{\M}{\langle M \rangle}
\newcommand{\wi}{\hat {\imath}}
\newcommand{\wj}{\hat {\jmath}}
\newcommand{\cF}{\mathcal{F}}
\newcommand{\cS}{\mathcal{S}}
\DeclareMathOperator{\co}{co}
\DeclareMathOperator{\Inn}{Int}
\DeclareMathOperator*{\argmin}{argmin}
\newsavebox\myboxA
\newsavebox\myboxB
\newlength\mylenA
\newcommand*\pbar[1]{%
  \hbox{%
     \vbox{%
      \hrule height 0.7pt 
      \kern0.35ex
      \hbox{%
         \kern-0.0em
         \ensuremath{#1}%
         \kern-0.0em
      }%
     }%
  }%
} 
\newcommand*\xbar[2][0.75]{%
    \sbox{\myboxA}{$\m@th#2$}%
    \setbox\myboxB\null
    \ht\myboxB=\ht\myboxA%
    \dp\myboxB=\dp\myboxA%
    \wd\myboxB=#1\wd\myboxA
    \sbox\myboxB{$\m@th\pbar{\copy\myboxB}$}
    \setlength\mylenA{\the\wd\myboxA}
    \addtolength\mylenA{-\the\wd\myboxB}%
    \ifdim\wd\myboxB<\wd\myboxA%
       \rlap{\hskip 0.5\mylenA\usebox\myboxB}{\usebox\myboxA}%
    \else
        \hskip -0.5\mylenA\rlap{\usebox\myboxA}{\hskip 0.5\mylenA\usebox\myboxB}%
    \fi}
\newenvironment{proof}{\textbf{Proof.~}}{\hfill$\square$}
\newtheorem{cor}{Corollary}{}
\newtheorem{rem}{Remark}{}
\newtheorem{conjecture}{Conjecture}
\newtheorem{prop}{Proposition}
\newtheorem{lemma}{Lemma}
\newtheorem{defn}{Definition}
\begin{document}

\title{Graph--Based Conditions for Feedback Stabilization of Switched and LPV Systems\footnote{This paper was not presented at any conference. This study was partially financed by the European Research Council (ERC) under the \emph{European Union's Horizon 2022 research and innovation program} under grant agreement No 864017 - L2C and by the ANR project HANDY 18-CE40-0010.\\ $^\ast$ Corresponding author. \\
\phantom{aa}\textit{Email addresses:} \textbf{\scriptsize matteo.dellarossa@uclouvain.be} (M. Della Rossa), \textbf{\scriptsize thiago.alves-lima@univ-lorraine.fr} (T. Alves Lima), \textbf{\scriptsize marc.jungers@univ-lorraine.fr} (M. Jungers), \textbf{\scriptsize raphael.jungers@uclouvain.be} (R. M. Jungers)}}


\author[1]{Matteo Della Rossa$^{\star,}$}
\author[2]{Thiago Alves Lima}
\author[2]{Marc Jungers}
\author[1]{Raphaël M.~Jungers}

\affil[2]{ICTEAM, UCLouvain, 4 Av. G. Lema\^itre, 1348 Louvain-la-Neuve, Belgium}

\affil[1]{Universit\'{e} de Lorraine, CNRS, CRAN, Nancy F-54000, France}  %

\maketitle

\begin{abstract}                          
This paper presents novel stabilizability conditions for switched linear systems with arbitrary and uncontrollable underlying switching signals. We distinguish and study two particular settings: i) the \emph{robust} case, in which the active mode is completely unknown and unobservable, and ii) the \emph{mode-dependent} case, in which the controller depends on the current active switching mode. The technical developments are based on  graph-theory tools, relying in particular on the path-complete Lyapunov functions framework. The main idea is to use directed and labeled graphs to encode Lyapunov inequalities to design robust and mode-dependent piecewise linear state-feedback controllers. This results in novel and flexible conditions, with the particular feature of being in the form of linear matrix inequalities (LMIs). Our technique thus provides a first controller-design strategy allowing piecewise linear feedback maps and piecewise quadratic (control) Lyapunov functions by means of semidefinite programming. Numerical examples illustrate the application of the proposed techniques, the relations between the graph order, the robustness, and the performance of the closed loop. 
\end{abstract}

\section{Introduction}
In this paper, given matrices $A_1, \dots, A_M\in \R^{n\times n}$ and $B_1, \dots, B_M\in \R^{n\times m}$, we consider the \emph{discrete-time switched control system}
\begin{equation}\label{eq:IntroSystem}
    x(k+1)=A_{\sigma(k)}x(k)+B_{\sigma(k)}u(k),
\end{equation}
where $\sigma:\N\to \M:=\{1,\dots, M\}$ is a switching signal. Switched systems are a popular model for hybrid or cyber-physical systems, with many applications in modern engineering; see e.g.,~\cite{Lib03, HesMor02,ShoWir07}. We are interested in the \emph{feedback stabilization problem} of~\eqref{eq:IntroSystem} under \emph{arbitrary switching}, i.e. we aim to design a feedback control stabilizing policy for~\eqref{eq:IntroSystem}, \emph{no matter} the underlying switching rule $\sigma:\N\to \M$. In recent years, stabilization of switched systems has been tackled from the perspective of designing a stabilizing \emph{switching sequence}, i.e. the signal $\sigma:\N\to \M$ is considered a control input for the ``autonomous'' system~\eqref{eq:IntroSystem} with $u(k) \equiv 0$. In this context, see~\cite{GerCol06, HuMaLIn08, LinAnt09,FiaJun14, JunMas17} and references therein, among many other results. The problem where \emph{both} the switching signal and the input $u(k)$ are available for design has also been tackled~\citep{FIACCHINI2017181}.

Instead, in this manuscript, we study the design of feedback maps $u(k)=\Phi(x(k)) $\ for which the switched closed-loop system~\eqref{eq:IntroSystem} is asymptotically stable, without any further assumption on the external switching policy. The underlying switching rule $\sigma:\N\to \M$  cannot be designed/modifiable by the user, and it can thus be seen as an external disturbance. 
Two cases can be highlighted, supposing, respectively, that the value of the switching signal is: 
\begin{itemize}[leftmargin=*]
    \item Un-observable by the designer/controller, in this case we aim to construct a \emph{robust feedback controller} $\Phi:\R^n\to\R^m$,
    \item Observable at the current instant of time, and in this case the goal is to design \emph{mode-dependent feedback controllers} $\Phi_1,\dots, \Phi_M:\R^n\to \R^m$.
\end{itemize}
\textcolor{black}{The stabilization problem has been intensively studied in a close and related context, that is, in the framework of polytopic \emph{linear parameter-varying} (LPV) systems, see \cite{BlaMia03} and references therein for a formal introduction.
While the class of LPV systems provides a more general model than~\eqref{eq:IntroSystem}, the corresponding stabilization problems are strongly related, and in some remarkable cases they are proved to be equivalent, see~\cite{BlaMiaSav07}.}
Several results have been established in the LPV framework, both for the robust and mode-dependent (in this literature, also called gain-scheduling) cases.
First, it has been proved that a quadratic Lyapunov function approach, while leading to numerically-appealing conditions, is conservative, see~\cite{DaafBer01,Bla95}. In order to have less conservative conditions in studying stability and stabilizability of~\eqref{eq:IntroSystem}, approaches based on \emph{piecewise-defined} (control) Lyapunov functions (and, thus piecewise continuous feedback maps) have been proposed: maxima and minima of quadratic functions in~\cite{GoeTeel06,GoeHu06}, polyhedral functions in~\cite{BlaMia03,BlaMiaSav07,BlaMia2008}. For an equivalence result between these two approaches, see~\cite{HuBla10}. 
While these methods provide exhaustive characterizations of the robust and mode-dependent stabilizability properties~\citep{BlaMiaSav07,HuBla10}, from a numerical point of view they are affected by two main limitations; first of all, the number of quadratics composing the candidate piecewise Lyapunov function (or, equivalently, the number of vertices of the candidate polyhedral Lyapunov function) to reach necessary conditions is theoretically unbounded. Secondly, the arising conditions are in the form of bilinear matrix inequalities (BMI), which are known to be NP-hard in general~\citep{TokOzb95}. 
\textcolor{black}{Other works have tackled related stabilization problems, with results leading to linear matrix conditions, for example considering \emph{static output feedback} with \emph{polynomial Lyapunov functions} (see~\cite{CheGar05,Che13} and references therein), or designing \emph{dynamic output} feedbacks with quadratic Lyapunov functions, such as~\cite{BlaMia09}. Herein, we focus on piecewise linear controllers.} 


In a slightly different setting, in studying \emph{stability} of~\eqref{eq:IntroSystem} (i.e., considering $B_1=\dots=B_M=0$),  
 the concept of \emph{path-complete Lyapunov functions} (PCLFs) has been introduced in~\cite{AhmJun:14}, in order to provide flexible conditions based on multiple Lyapunov functions. This framework involves a combinatorial component given by a directed graph that describes the set of Lyapunov inequalities to be verified and that has to be \emph{path-complete} in the sense that it captures every finite switching sequence. The complexity of this underlying graph can be increased by the user to reduce the conservatism of the arising stability conditions. For recent developments on this topic and more discussion regarding the relationship between graphs and conservatism, we refer to~\citep{DebDel22}. 
In this setting, in~\cite[Theorem III.8]{PhiAthAng}, it has been proved that any path-complete  Lyapunov function induces/can be used to construct a common Lyapunov function in the form of max-min of quadratics, thus building a bridge between the PCLF framework of~\cite{AhmJun:14} and the piecewise quadratic functions approach of~\cite{JohRan97,GoeHu06,GoeTeel06}. Summarizing, it is shown in~\cite{PhiAthAng} that the PCLF framework provides a \emph{compressed} representation of common Lyapunov functions, which allows for faster computation. In addition, the directed graph defining the PCLF provides a discrete design parameter for the user, which can be optimized in order to mitigate the numerical computational effort.

In this manuscript, we propose a novel approach to robust and mode-dependent stabilization of~\eqref{eq:IntroSystem}. Our method relies on path-complete Lyapunov functions theory, and, more specifically, we adapt and generalize this framework from stability to \emph{stabilizability} analysis.
The arising sufficient stabilization results, while depending on an underlying combinatorial structure (a path-complete graph), lead to \emph{linear matrix inequalities} (LMI) conditions, thus bypassing the numerical limitations of previous results~\citep{BlaMia2008,BlaMiaSav07,GoeHu06,GoeTeel06, HuBla10}. 
From an analytic point of view, our conditions lead to the construction of piecewise quadratic control Lyapunov functions, in the form of \emph{minimum} of quadratics, for both the robust and mode-dependent cases; moreover, the resulting feedback controllers are in a \emph{piecewise linear form}.
We thus build a connection with existing literature on piecewise quadratic (control) Lyapunov functions and piecewise defined controller, see for example~\cite{JohRan97,GoeHu06,HuBla10,LegRakRap21}.

We point out that  we recover, as a particular case of our conditions, the techniques proposed in~\cite{Lee06,LeeDull06, LeeKha09, EssLee14}; however, on top of being more general, our approach does not require the observation and memorization of past/future values of the switching signal (as in the memory/horizon-based control of~\cite{Lee06,LeeDull06,LeeKha09,EssLee14}). Indeed, the core of our proof technique is based on the above-mentioned results from path-complete Lyapunov functions~\citep{PhiAthAng}, which shows that a path-complete graph describes \emph{implicitly} a \emph{common Lyapunov function}, thus leading to a closed expression for the feedback maps (in a piecewise-linear form). This in particular allows us to tackle robust stabilization of LPV systems, for which the underlying time-varying parameter is un-observable.  Moreover, our stabilization techniques are valid for a class of graphs that goes beyond the memory structures used in~\cite{Lee06,LeeDull06,LeeKha09,EssLee14}, thus allowing for a larger class of stabilization certificates. For a discussion of the benefits of such generalization in the stability context, we refer to~\cite{PhiAthAng,DebDel22,DelRosJun22}.
Our theoretical developments are then illustrated, both in the robust and mode-dependent cases, with the help of numerical examples already introduced in the literature, thus allowing the comparison between our technique and existing results.

The rest of the manuscript is organized as follows: in Section~\ref{sec:Prel} we recall the necessary definitions and base results for switched systems and graph theory. In Section~\ref{Sec:RobFeedback} we derive our main results concerning robust stabilizability, while in Section~\ref{Sec:ModeDep} we present the stabilizability statement in the mode-dependent case. Section~\ref{sec:Conclu} closes the manuscript with some concluding remarks and possible directions for future research.

\textbf{Notation:} We denote by $\N$ the set of natural numbers including $\{0\}$, by $\Zp$ the set of natural numbers excluding $\{0\}$. Given $M\in \Zp$, we define $\M:=\{1,\dots, M\}$.  Given $n,m\in \Zp$, $\cC^0(\R^n,\R^m)$ is the set of continuous functions from $\R^n$ to $\R^m$. We denote by $\bS^{n\times n}$ the set of the $n\times n$ symmetric matrices, and by  $\bS_+^{n\times n}$  the set of $n\times n$ positive definite matrices. A function $U:\R^n\to \R$ is said to be \emph{positive definite} if $U(0)=0$ and $U(x)>0$ for all $x\neq 0$. It is said to be radially unbounded if $\lim_{\lambda \to +\infty} U(\lambda x)=+\infty$ for all $x\neq 0$.

\section{Preliminaries}\label{sec:Prel}
This section introduces the studied setting and recalls the necessary definitions and tools.
\subsection{Stabilization Notions and Characterizations}
Given $M\in \Zp$, and $f_1,\dots, f_M:\R^n\to \R^n$ such that $f_j(0)=0$ for all $j\in \M$, consider the \emph{switched system}
\begin{equation}\label{eq:ClosedLoopSwitchedSystem}
x(k+1)=f_{\sigma(k)}(x(k)),\;\;\;x(0)=x_0,\;\;k\in \N,
\end{equation}
where $\sigma:\N\to \M$ is an external switching signal and $x_0 \in \R^{n}$ is the initial condition. We denote by $\Phi_\sigma(k,x_0)$ the solution of~\eqref{eq:ClosedLoopSwitchedSystem} starting at $x_0\in \R^n$ and with respect to the signal $\sigma:\N\to \M$, evaluated at time $k\in \N$.
\begin{defn}\label{Def:UGASandUES}
System~\eqref{eq:ClosedLoopSwitchedSystem} is said to be \emph{uniformly globally asymptotically stable} (UGAS) if there exists a $\cKL$ function\footnote{A continuous function $\beta:\R_+\times \R_+\to \R_+$ is of \emph{class $\mathcal{KL}$} if $\beta(0,s)=0$  for all $s$, $\beta(\cdot,s)$ is strictly increasing and unbounded for all $s$, $\beta(r,\cdot)$ is decreasing and $\beta(r,s)\to 0$ as $s\to\infty$, for all $r$.} $\beta$ such that, for all $\sigma:\N\to \M$ and for all $x_0\in \R^n$, we have $|\Phi_\sigma(k,x_0)|\leq \beta(|x_0|,k)$. If the function $\beta$ is of the form $\beta(r,k):=C\gamma^k\,r$ for some $C>0$ and $\gamma\in [0,1)$ the system is said to be \emph{uniformly exponentially stable} (UES), and $\gamma$ is said to be the \emph{decay rate} of the system.
\end{defn}

Let us consider $M\in \Zp$ and a set $\cF=\{(A_i,B_i)\in \R^{n\times n}\times \R^{n\times m}\;\vert\;i\in \M\}$. We want to study the \emph{discrete-time control switched system} defined by
\begin{equation}\label{eq:SwitchedSystemInput}
x(k+1)=A_{\sigma(k)}x(k)+B_{\sigma(k)}u(k),\;\;\;k\in \N,
\end{equation}
where $\sigma:\N\to \M$ is an external \emph{switched signal} and $u:\N\to \R^m$ is a control input.
\begin{rem}[Related models: LPVs]
As far as the general class of switching signals is considered, i.e., with no further assumption on the feasible signals $\sigma\in\cS:=\{\theta:\N\to \M\}$, system~\eqref{eq:SwitchedSystemInput} can be equivalently represented in different frameworks.
Notably, we can consider the formalism of \emph{linear parameter-varying (LPV) systems}, i.e. the case of
\begin{equation}\label{eq:DiscTimeLPV}
x(k+1)= \cA(w(k))x(k)+\cB(w(k))u(k),
\end{equation}
where $w:\N\to \Lambda_M:=\{w\in \R^M_+\;\vert\; \sum_{i=1}^M w_i=1\}$ is a time-varying parameter taking values in $\Lambda_M$, the standard simplex of dimension $M-1$, and $\cA(w)=\sum_{i=1}^M w_iA_i$ and $\cB(w)=\sum_{i=1}^Mw_iB_i$.
System~\eqref{eq:DiscTimeLPV} provides a more general framework with respect to~\eqref{eq:SwitchedSystemInput}, since it allows the state to ``follow'' directions obtained as \emph{convex combination} of sub-systems in $\cF$.
Under some assumption on the proposed design methods and on the systems matrices, the stabilization techniques for~\eqref{eq:SwitchedSystemInput} are also effective for~\eqref{eq:DiscTimeLPV}, as discussed in what follows. \hfill $\triangle$
\end{rem}

\begin{defn}[Stabilization Notions]\label{defn:StabNot}
System~\eqref{eq:SwitchedSystemInput} is said to be
\begin{enumerate}[leftmargin=*]
\item \emph{Robust Feedback Stabilizable (RFS)} if there exists a $\Phi:\R^n \to \R^m$ such that the closed-loop system given by
\begin{equation}\label{eq:modeIndSys}
x(k+1)=A_{\sigma(k)}x(k)+B_{\sigma(k)}\Phi(x(k))
\end{equation}
is UGAS.\label{item:ModeInd(NONLInStab)}
\item \emph{Mode-Dependent Feedback Stabilizable (MDFS)} if there exist $\Phi_1,\dots, \Phi_M:\R^n \to \R^m$ such that the closed-loop system given by
\begin{equation}\label{eq:modeDepSys}
x(k+1)=A_{\sigma(k)}x(k)+B_{\sigma(k)}\Phi_{\sigma(k)}(x(k))
\end{equation}
is UGAS.\label{item:ModeDep(NONLInStab)}
\end{enumerate}
\end{defn}
In order to present the main equivalence results concerning robust and mode-dependent stabilization, we introduce a tailored notion of piecewise linear functions, inspired by~\cite[Problem 3.29]{boyd2004convex}.
\begin{defn}[Piecewise Linear Functions]\label{defn:piecwiseLinear}
A function $\Psi:\R^n\to \R^m$ is said to be \emph{piecewise linear} if:
\begin{itemize}[leftmargin=*]
\item It is \emph{homogeneous of degree $1$}, i.e., for any $\lambda \in \R_+$ and any $x\in \R^n$, it holds that $\Psi(\lambda x)=\lambda \Psi(x)$;
\item For some $c\in \Zp$, there exist $c$ closed convex cones $C_1,\dots C_c\subset \R^n$ and $c$ linear maps $\Xi_1,\dots, \Xi_c:\R^n\to\R^m$ such that $\bigcup_{j\in \langle c\rangle}C_j=\R^n$, $\Inn(C_i)\cap \Inn(C_j)=\emptyset$ for all $i\neq j$ and $\Psi(x)=\Xi_j(x)$ for all $x\in \Inn(C_j)$ for all $j\in \langle c \rangle$.
\end{itemize}
\end{defn}
    \textcolor{black}{We note that Definition~\ref{defn:piecwiseLinear} does \emph{not} imply continuity, therefore the piecewise linear feedback maps that we consider are possibly discontinuous, on the null measure set defined by the union of the boundaries of the cones $C_1, \dots C_c$ in the definition.}
In what follows, we recall a characterization result proved in~\cite{BlaMia03,BlaMiaSav07,BlaMia2008} and reviewed here for the sake of completeness.
\begin{prop}{\citep[Proposition 2]{BlaMiaSav07}}\label{prop:Charachterization}
Given $M\in \Zp$ and a set $\cF=\{(A_i,B_i)\in \R^{n\times n}\times \R^{n\times m}\;\vert\;i\in \M\}$, system~\eqref{eq:SwitchedSystemInput} is:
\begin{itemize}
    \item[(A)] \emph{RFS} if and only if there exists a \emph{piecewise linear} feedback control $\Phi:\R^n \to \R^m$ such that the closed-loop~\eqref{eq:modeIndSys} is UGAS.\label{item:APropBlanchini}
    \item[(B)] \emph{MDFS} if and only if there exist \emph{piecewise linear} feedback controls $\Phi_1,\dots,\Phi_M:\R^n\to \R^m$ such that the closed-loop~\eqref{eq:modeDepSys} is UGAS.\label{item:BPropBlanchini}
\end{itemize}
\end{prop}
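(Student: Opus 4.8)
The ``if'' implications in both (A) and (B) are immediate: a piecewise linear map is in particular an admissible feedback, so if it renders~\eqref{eq:modeIndSys} (resp.~\eqref{eq:modeDepSys}) UGAS, then system~\eqref{eq:SwitchedSystemInput} is RFS (resp. MDFS) by Definition~\ref{defn:StabNot}. The content is thus the two ``only if'' directions, namely that \emph{general} feedback stabilizability can always be realized by a \emph{piecewise linear} feedback. The plan rests on the structural fact that the maps $(x,u)\mapsto A_ix+B_iu$ are positively homogeneous of degree $1$ and that the stabilizability notions are invariant under positive scaling of the state. I would proceed in three steps: (i) pass from arbitrary-feedback UGAS to the existence of a \emph{polyhedral} (robust, resp. mode-dependent) control Lyapunov function; (ii) read off a piecewise linear feedback from the vertices of the associated polytope; (iii) verify exponential decay of the constructed closed loop.

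For step (i), I would exploit homogeneity to reduce UGAS to a contractive statement and then build a polytopic contractive set. Concretely, starting from a convex compact neighbourhood $\Omega_0$ of the origin, define iteratively the robust one-step controllable sets $\Omega_{k+1}=\{x\in\R^n : \exists\, u\in\R^m \text{ with } A_ix+B_iu\in\Omega_k \text{ for all } i\in\M\}$ (in the mode-dependent case $u$ is allowed to depend on $i$, which enlarges each $\Omega_{k+1}$). Using the $\cKL$ bound $\beta$ together with homogeneity, this nested family, after a suitable rescaling, yields a convex, compact set $P$ with $0\in\Inn(P)$ that is \emph{$\lambda$-contractive} for some $\lambda\in[0,1)$, i.e., for each $x\in P$ there is $u$ (resp.\ a mode-indexed $u_i$) with $A_ix+B_iu\in\lambda P$ for all $i$. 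The Minkowski gauge $V(x):=\inf\{t>0 : x\in tP\}$ is then a homogeneous-degree-$1$, positive definite, radially unbounded control Lyapunov function. The main obstacle lies here: one must ensure that $P$ can be taken to be a \emph{polytope} (finitely many vertices), which requires approximating the possibly non-polyhedral limit set from inside by polytopes while preserving strict contractivity. This is exactly the delicate finiteness/termination argument of~\cite{BlaMia03,BlaMiaSav07}, which relies on compactness of the simplex $\Lambda_M$ and the strictness $\lambda<1$ to absorb the approximation error.

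For step (ii), write $P=\co\{v_1,\dots,v_s\}$. In the robust case, $\lambda$-contractivity furnishes for each vertex $v_j$ a single control $u_j$ with $A_iv_j+B_iu_j\in\lambda P$ for all $i\in\M$; in the mode-dependent case it furnishes, for each $i$ and $j$, a control $u_{i,j}$ with $A_iv_j+B_iu_{i,j}\in\lambda P$. I would then triangulate $\R^n$ into finitely many closed simplicial cones $C_1,\dots,C_c$ subordinate to a simplicial refinement of the facets of $P$, so that $\bigcup_{j}C_j=\R^n$ and $\Inn(C_i)\cap\Inn(C_j)=\emptyset$ for $i\neq j$, matching Definition~\ref{defn:piecwiseLinear}. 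On the cone generated by $v_{j_1},\dots,v_{j_n}$ every $x$ is written uniquely as $x=\sum_k\alpha_k v_{j_k}$ with $\alpha_k\ge0$, and I set $\Phi(x):=\sum_k\alpha_k u_{j_k}$ (resp.\ $\Phi_i(x):=\sum_k\alpha_k u_{i,j_k}$). This map is linear on each cone and homogeneous of degree $1$, hence piecewise linear in the required sense.

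Finally, for step (iii), convexity of $P$ gives, for $x=\sum_k\alpha_k v_{j_k}$ in a fixed cone, $A_ix+B_i\Phi(x)=\sum_k\alpha_k(A_iv_{j_k}+B_iu_{j_k})\in\lambda\big(\sum_k\alpha_k\big)P$, so that $V(A_ix+B_i\Phi(x))\le \lambda\sum_k\alpha_k$; since $x/(\sum_k\alpha_k)$ is a convex combination of boundary vertices, $V(x)=\sum_k\alpha_k$ on that cone, whence $V(A_ix+B_i\Phi(x))\le\lambda V(x)$. As $V$ is a positive definite, radially unbounded, homogeneous gauge and $\lambda<1$, standard Lyapunov arguments yield the $\cKL$ (indeed exponential) estimate, so the piecewise linear $\Phi$ (resp.\ $\Phi_1,\dots,\Phi_M$) renders the closed loop UGAS. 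The mode-dependent case (B) is entirely parallel, the only change being the per-mode freedom in choosing the vertex controls in steps (i)--(ii).
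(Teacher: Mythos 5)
Your argument is essentially the paper's: the paper gives no proof of this proposition, stating it as a quotation of \cite[Proposition~2]{BlaMiaSav07} and noting only that the proof there ``relies on the construction of control-invariant polyhedral sets and, thus, polyhedral Lyapunov functions,'' which is precisely the $\lambda$-contractive-polytope/Minkowski-gauge/vertex-control construction you sketch, and your steps (ii)--(iii) (simplicial-cone interpolation of the vertex controls and the decay estimate $V(A_ix+B_i\Phi(x))\le\lambda V(x)$) are carried out correctly. The one genuinely nontrivial ingredient --- that the contractive set can be taken \emph{polyhedral} with finitely many vertices --- you correctly isolate but defer to \cite{BlaMia03,BlaMiaSav07}, which is exactly the level of detail at which the paper itself treats the result.
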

The proof of this proposition contained in~\cite{BlaMiaSav07} relies on the construction of \emph{control-invariant polyhedral sets} and, thus, polyhedral Lyapunov functions. Note that the key result in the proposition is that the existence of \emph{piecewise linear} feedback maps is both necessary and sufficient for the stabilizability of~\eqref{eq:SwitchedSystemInput}, in both robust and mode-dependent cases.
We also note that by homogeneity (see~\cite[Section 5.3]{BacRosier} for the details), when a stabilizing piecewise linear feedback is provided, the closed loop will be \emph{uniformly exponentially stable} (UES). 
\begin{rem}
While Proposition~\ref{prop:Charachterization} completely characterizes the (robust and mode-dependent) stabilizability problem, the underlying conditions  are unsatisfactory from a numerical point of view. Indeed, since the proof of~\cite[Proposition 2]{BlaMiaSav07} relies on the construction of polyhedral control Lyapunov functions, the following limitations can be highlighted:
\begin{itemize}[leftmargin=*]
\item The number of vertices of the polyhedral level sets of the candidate Lyapunov functions has to be fixed a priori. To reach necessary conditions, the aforementioned number of vertices could be arbitrarily large.
\item Even when the number of vertices is fixed,  the arising matrix conditions turn out to be \emph{bilinear matrix inequalities}  (BMIs) which are, in general, NP-hard to solve, see the discussion in~\cite[Section 7.3.2]{BlaMia2008}.
    \end{itemize}
   
   Another classic approach (see~\cite{DaafBer01} and references therein) relies on quadratic control Lyapunov functions and linear feedbacks, i.e. supposing  the existence of $K_1,\dots, K_M\in \R^{n\times m}$ (a unique $K\in\R^{n\times m}$ in the robust case) and a $P\in \bS^{n \times n}_+$ such that
    \[
(A_i+B_iK_i)^\top P(A_i+B_iK_i)- P\prec 0,\;\;\;\forall \;i\in \M,
    \]
    which are \emph{nonlinear} matrix inequalities.
Re-parametrizing, considering the variables $Q=P^{-1}$ and $R_i=K_iP^{-1}$, and finally pre- and post-multiplying the latter inequality by Q, we obtain
$
(QA_i^\top+R_i^\top B_i^\top)Q^{-1}(A_iQ+B_iR_i)-Q\prec 0,
$
which can be rewritten, via Schur complement, as
\[
\begin{bmatrix}
Q & \star \\ A_iQ+B_iR_i & Q
\end{bmatrix} \succ 0, \;\;\forall i\in \M,
\]
leading to LMI conditions (the robust case, with a constant $K\in \R^{n\times m}$, is similar). The drawback of this well-known design technique are the following:
\begin{itemize}[leftmargin=*]
\item The existence of \emph{quadratic} common Lyapunov function (and linear feedbacks) is only sufficient for stability/stabilizability of switched systems, as highlighted by Proposition~\ref{prop:Charachterization}.
\item The change-of-variables technique is not applicable, in general, for broader contexts, for example involving multiple-(control)-Lyapunov functions. 
\end{itemize}

\end{rem}
\textcolor{black}{
For the reasons mentioned above, in the following sections, we propose a novel feedback design technique based on graph theory, which will lead to LMI-based conditions.
}

\subsection{Preliminaries on graph theory and path-complete Lyapunov functions}

In the following, we collect some graph-theory notions which will be used in our formal statements. Moreover, we recall the basic ideas of the sufficient conditions for stability of~\eqref{eq:ClosedLoopSwitchedSystem} based on \emph{path-complete Lyapunov functions}, for an overview see~\cite{PhiAthAng,AhmJun:14,PEDJ:16}.

A (labeled and directed) \emph{graph} $\cG=(S,E)$ on $\M$ is defined by a finite set of \emph{nodes} $S$ and a subset of labeled \emph{edges} $E\subset S\times S\times \M$.
\begin{defn}[Path-Complete Graphs]\label{defn:Path-Completeness}
A graph $\cG=(S,E)$ is \emph{path-complete} for $\M$ if, for any $l\geq 1$ and any ``word'' $(j_1\,\dots\,j_l)\in \M^l$, there exists a \emph{path}  $\{(s_k,s_{k+1},j_k)\}_{1\leq k\leq l}$ such that $(s_k,s_{k+1},j_k)\in E$, for each $1\leq k\leq l$.
\end{defn}
Intuitively, a graph is path-complete if any possible switching sequence can be reconstructed by walking through the edges of the graph.
In our setting, path-complete graphs will represent and encode the \emph{structure} of our stability certificates, or in other words, the structure of the inequalities among the candidate Lyapunov functions we aim to verify, as formalized in what follows.

\begin{defn}[Path-complete Lyapunov functions]\label{defn:PCLF}
Given a family $F = \{ f_1, \dots, f_M\}\subset \cC^0(\R^n, \R^n)$, a \emph{path-complete Lyapunov function} (PCLF) for $F$ is a pair $(\cG,V)$ where $\cG = (S,E)$ is a path-complete graph, and $V = \{ V_s \mid s \in S \} \subseteq \cC^0(\R^n ,\R)$ is a set of positive definite and radially unbounded functions  such that the following inequalities are satisfied: 
\begin{equation*}
\forall \, e=(a,b,i) \in E, \: \forall x \in \mathbb{R}^n\setminus \{0\}: \: V_b(f_i(x))<V_a(x).
\end{equation*}
\end{defn}
\vspace{-0.2cm}
We present, in what follows, the main stability result we will use in our stabilizing feedback design.
\begin{prop} \label{Thm:PCLFimpliesStability}
Consider a switched system~\eqref{eq:ClosedLoopSwitchedSystem} defined by $F = \{ f_1, \dots, f_M\}\subset \cC^0(\R^n, \R^n)$.  If there exists a path-complete Lyapunov function $(\cG,V)$ for $F$, then~\eqref{eq:ClosedLoopSwitchedSystem} is UGAS.
\end{prop}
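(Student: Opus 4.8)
The plan is to show that the existence of a path-complete Lyapunov function $(\cG, V)$ forces every solution of~\eqref{eq:ClosedLoopSwitchedSystem} to decrease along a suitable aggregate Lyapunov function, uniformly in the switching signal. The natural candidate for a common Lyapunov function is
\begin{equation*}
    V(x) := \min_{s\in S} V_s(x),
\end{equation*}
since $S$ is finite and each $V_s$ is positive definite, continuous, and radially unbounded, so that $V$ inherits these properties. The key structural idea is that path-completeness guarantees, for any initial node choice and any finite switching word $(j_1\,\dots\,j_l)$, the existence of a path in $\cG$ reading that word; I would use this to track the index $s_k \in S$ along a trajectory so that the inequality $V_{s_{k+1}}(f_{j_k}(x(k))) < V_{s_k}(x(k))$ applies at each step.

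First I would fix an arbitrary switching signal $\sigma:\N\to\M$ and an initial condition $x_0$, and consider the associated trajectory $x(k) = \Phi_\sigma(k,x_0)$. The main combinatorial step is to select, for each finite horizon, a sequence of nodes $s_0, s_1, s_2, \dots$ such that $(s_k, s_{k+1}, \sigma(k)) \in E$ for every $k$; path-completeness yields such a path for any prescribed word, and one must argue that these finite paths can be assembled consistently into an infinite walk (for instance via a compactness/König's-lemma argument on the finitely-branching tree of admissible node sequences, or by choosing at each step a successor edge carrying the correct label). Once such an infinite walk is in hand, the PCLF inequalities give the telescoping chain
\begin{equation*}
    V_{s_{k+1}}(x(k+1)) < V_{s_k}(x(k)) \quad \text{for all } k\in\N,
\end{equation*}
and therefore $V(x(k+1)) \le V_{s_{k+1}}(x(k+1)) < V_{s_k}(x(k))$. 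Bounding $V_{s_k}(x(k))$ from above by relating it back to $V(x(k))$ requires care, since the node $s_k$ realizing the path need not be the minimizing node; this is the technical point I expect to be the main obstacle.

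To close the gap, I would leverage the strictness of the inequalities together with homogeneity or compactness arguments on the unit sphere to extract a uniform decrease: on the compact set $\{x : V(x) = c\}$ the continuous, strictly-decreasing step map admits a uniform contraction factor, independent of $\sigma$, which is exactly what is needed to produce the $\cKL$ estimate of Definition~\ref{Def:UGASandUES}. Concretely, I would show that $V$ is a common (weak-sense) Lyapunov function by verifying that along any trajectory $V$ is strictly decreasing and that the decrease can be made uniform, then invoke the standard converse-type argument that a continuous, positive definite, radially unbounded function decreasing uniformly along all solutions certifies UGAS. The delicate part throughout is maintaining \emph{uniformity} over the uncountable family of switching signals; the finiteness of $S$ and the strict inequalities in Definition~\ref{defn:PCLF} are precisely the ingredients that allow this uniformity to be recovered, so I would structure the proof around extracting a single decay rate valid for all paths in the finite graph $\cG$.
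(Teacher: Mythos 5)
There is a genuine gap, and it sits exactly where you flagged it. Your candidate common Lyapunov function $V(x)=\min_{s\in S}V_s(x)$ does \emph{not} work for a general path-complete graph: if the minimum at $x(k)$ is attained at a node $a$ that has no outgoing edge labeled $\sigma(k)$, then no inequality of Definition~\ref{defn:PCLF} is available at that step, and the quantity $\min_s V_s$ can genuinely increase. The min-of-$V_s$ construction is only valid under the stronger hypothesis that the graph is \emph{complete} in the sense of Definition~\ref{def:ComplateGraph} (every node has an outgoing edge for every label); that is precisely the extra assumption the paper imposes in Proposition~\ref{Prop:RobustMachin}, whose proof is the min-of-quadratics argument you are reaching for. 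For a merely path-complete graph the induced common Lyapunov function is of max-of-min type (see the discussion of \cite{PhiAthAng} in the introduction), not a plain min, so the obstacle you call ``technical'' is in fact where the approach breaks. A second, smaller issue: you invoke homogeneity to extract a uniform contraction factor, but the $f_i$ here are only assumed continuous with $f_i(0)=0$, so homogeneity is not available.

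The standard proof (the paper defers to \cite{AhmJun:14,PEDJ:16,PhiAthAng} rather than proving the statement itself) avoids any single aggregate function. Fix $\sigma$ and a horizon $l$; path-completeness gives a path $\{(s_k,s_{k+1},\sigma(k))\}_{0\le k< l}$ reading the word $(\sigma(0)\dots\sigma(l-1))$, and telescoping the PCLF inequalities along the trajectory yields $V_{s_l}(x(l))<V_{s_0}(x_0)$. One then sandwiches: $\min_{s\in S}V_s(x(l))\le V_{s_l}(x(l))<V_{s_0}(x_0)\le \max_{s\in S}V_s(x_0)$. Since $S$ is finite and each $V_s$ is continuous, positive definite and radially unbounded, both $\min_s V_s$ and $\max_s V_s$ are bounded above and below by class-$\cK_\infty$ functions of $|x|$, which gives uniform boundedness and uniform stability without ever needing the end node to be the minimizing one. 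Attractivity, uniformly in $\sigma$, then follows from a compactness argument: on any annulus $\{r\le|x|\le R\}$ the finitely many continuous functions $V_a(x)-V_b(f_i(x))$, $(a,b,i)\in E$, attain a positive minimum, which quantifies the strict decrease and yields the $\cKL$ estimate. Note also that the infinite-walk/K\"onig's-lemma step you propose is unnecessary: finite prefixes suffice, since the bound obtained at each horizon $l$ is already uniform.
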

The proof of Proposition~\ref{Thm:PCLFimpliesStability} can be found in~\cite{AhmJun:14, PEDJ:16, PhiAthAng}. This result establishes, given \emph{any} path-complete graph, sufficient conditions for stability of~\eqref{eq:ClosedLoopSwitchedSystem}. For a discussion of the conservatism and comparison among different graph structures for stability, see~\cite{DebDel22}.
In order to adapt path-complete Lyapunov stability techniques from \emph{stability} to \emph{stabilization} problem of~\eqref{eq:SwitchedSystemInput}, we need to refine Definition~\ref{defn:Path-Completeness}, introducing additional properties on the considered graphs, as defined in what follows. 

\begin{defn}[Complete graphs]\label{def:ComplateGraph}
A graph $\cG=(S,E)$ on $\langle M \rangle$ is \emph{complete} if for all $a \in S$, for all $i \in \langle M \rangle$, there exists at least one node $b \in S$ such that the edge $e=(a,b,i) \in E$.
\end{defn}
We note that any complete graph\footnote{\textcolor{black}{The considered notion of completeness, arising from automata theory, should not be confused with the notion of graph completeness imposing the existence of any possible edge, i.e. $E=S\times S\times \M$, more common in classic graph theory.}} is in particular path-complete; for a graphical visualization, we point out that the graphs in Figure~\ref{fig:GraphH} are complete. 
\textcolor{black}{
In what follows, we introduce a hierarchical family of complete graphs firstly defined in the seminal paper~\citep{DeBru46}, used in what follows as a tool for numerical verification of the proposed conditions.}

\begin{defn}[~\citep{DeBru46}]\label{defn:DeBRunjii}
Given $M \in \Zp, l\in \N$ the \emph{(primal) De Bruijn graph of order $l$ (on $\M$)} denoted by $\cH^l(M)=(S_{p,l},E_{p,l})$ is defined as follows: $S_{p,l}:=\M^{l}$ and, given any node $\wi=(i_1, \dots,i_{l})\in S_{p,l}$, we have $(\wi,\wj, h)\in E_{p,l}$ for every $\wj$ of the form $\wj=(h,i_1,\dots, i_{l-1})$, for any $h\in \M$.
\end{defn}
It can be seen that the graph $\cH^l(M)$ is complete $\forall\,M \in \Zp, l\in \N$; see Fig.~\ref{fig:GraphH} for a graphical representation.


\begin{figure*}[!t]
    \centering
    {\includegraphics[width=0.8\linewidth]{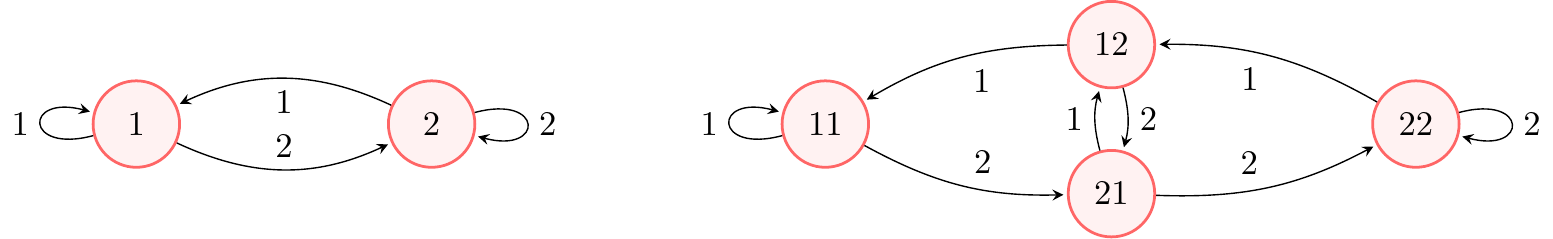}}
    \caption{The graphs $\mathcal{H}^1(2)$ and $\mathcal{H}^2(2)$, respectively the primal De Bruijn graphs of order $1$ and $2$ on $\M=\{1,2\}$.}
    \label{fig:GraphH}
\end{figure*}

\section{Robust Feedback via Path-complete Graphs}\label{Sec:RobFeedback}
\textcolor{black}{In this section, we provide an LMI formulation for the design of \emph{piecewise linear feedback} for the robust stabilization problem. The arising conditions will generalize existing approaches (e.g.~\citep{BlaMiaSav07,BlaMia03,HuBla10}), via the use of multiple, or more precisely, path-complete Lyapunov functions.} The theoretical developments are then illustrated by numerical examples.
\subsection{Piecewise Linear  Robust Feedback}\label{sec:RobustSubSec}
In this subsection we provide the first stabilization result, making use of the path-complete framework to provide \emph{min of quadratics} control Lyapunov functions, together with \emph{piecewise linear} robust feedback control map for system~\eqref{eq:SwitchedSystemInput}. 
\begin{prop}\label{Prop:RobustMachin}
Consider $M\in \Zp$ and a set $\cF=\{(A_i,B_i)\in \R^{n\times n}\times \R^{n\times m}\;\vert\;i\in \M\}$, and a \emph{complete} graph $\cG=(S,E)$ on $\M$. Suppose there exist $\{P_s\}_{s\in S}\subset \bS_+^{n\times n}$, $\{K_s\}_{s\in S}\subset\R^{m\times n}$ such that
\begin{equation}\label{eq:RobustConditionPiecewise}
(A_i+B_iK_a)^\top P_b (A_i+B_iK_a)-P_a\prec 0,\;\;\forall e=(a,b,i)\in E,
\end{equation}
then the map $\Phi(x):=K_{\gamma(x)}x$, $x\in \R^n$, robustly exponentially stabilizes system~\eqref{eq:SwitchedSystemInput}, where $\gamma:\R^n\to S$ is any function satisfying 
\begin{equation}\label{eq:argminDefinition}
\gamma(x)\in\argmin_{s\in S} \{x^\top P_sx\},\;\;\;\forall x\in \R^n,
\end{equation}
and for which $\Phi:\R^n\to \R^m$ is piecewise linear (recall Definition~\ref{defn:piecwiseLinear}).\footnote{If the node set $S$ is an ordered set, we can simply choose $\gamma(x)=\min_S(\argmin_{s\in S} \{x^\top P_sx\})$, where $\min_S(S')$ denotes the minimal element of $S'\subseteq S$, with respect to the~order~of~$S$.}
\end{prop}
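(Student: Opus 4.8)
The plan is to reduce the stabilization claim to the stability result already in hand, namely Proposition~\ref{Thm:PCLFimpliesStability}, by showing that the feedback-modified closed loop admits a path-complete Lyapunov function built from the family $\{P_s\}_{s\in S}$. First I would define the candidate common Lyapunov function as the pointwise minimum $V(x):=\min_{s\in S}\{x^\top P_s x\}$, which is positive definite and radially unbounded since each $P_s\succ 0$ and $S$ is finite. The key observation is that $\gamma(x)$ as defined in~\eqref{eq:argminDefinition} selects, at each $x$, an index achieving this minimum, so $V(x)=x^\top P_{\gamma(x)}x$, and the feedback applied at $x$ is exactly the gain $K_{\gamma(x)}$ associated with that selected node.

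Next I would verify the decrease condition along the closed-loop dynamics~\eqref{eq:modeIndSys}. Fix any $x\neq 0$ and any active mode $i\in\M$; set $a:=\gamma(x)$, so the control is $u=K_a x$ and the successor state is $x^+=(A_i+B_iK_a)x$. Because $\cG$ is \emph{complete} (Definition~\ref{def:ComplateGraph}), for this node $a$ and this mode $i$ there exists a node $b\in S$ with $(a,b,i)\in E$. The LMI~\eqref{eq:RobustConditionPiecewise} for that edge gives $x^\top(A_i+B_iK_a)^\top P_b(A_i+B_iK_a)x < x^\top P_a x$, i.e. $(x^+)^\top P_b\,x^+ < x^\top P_a x = V(x)$. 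Since $V(x^+)=\min_{s}\{(x^+)^\top P_s x^+\}\le (x^+)^\top P_b x^+$, we conclude $V(x^+)<V(x)$ for every mode $i$, which is precisely the PCLF decrease inequality on the (single-node, self-looping) certificate driving the common function $V$.

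The cleanest way to package this is to observe that $(\cG,\{V_s\})$ with $V_s(x)=x^\top P_s x$ is, by~\eqref{eq:RobustConditionPiecewise}, a path-complete Lyapunov function for the \emph{closed-loop} switched family $\{f_i\}$ where $f_i(x)=(A_i+B_iK_{\gamma(x)})x$; completeness of $\cG$ ensures path-completeness, and the argument above shows the min $V$ strictly decreases regardless of the external $\sigma$. Invoking Proposition~\ref{Thm:PCLFimpliesStability} then yields UGAS; homogeneity of the closed loop (each $f_i$ is homogeneous of degree $1$, being linear on each cone) upgrades this to UES, exactly as noted after Proposition~\ref{prop:Charachterization}. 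It remains to confirm that $\Phi(x)=K_{\gamma(x)}x$ meets Definition~\ref{defn:piecwiseLinear}: I would show the level sets of the $x^\top P_s x$ partition $\R^n$ into the closed convex cones $C_s:=\{x\mid x^\top P_s x\le x^\top P_t x\ \forall t\}$, on whose interiors $\gamma$ is constant and $\Phi$ coincides with the linear map $x\mapsto K_s x$, and that $\Phi$ is homogeneous of degree $1$ since $\gamma(\lambda x)=\gamma(x)$ for $\lambda>0$.

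The main obstacle, and the step requiring the most care, is the interface between the \emph{pointwise} choice $\gamma(x)$ (which fixes a single gain at each state) and the \emph{edge-by-edge} structure of~\eqref{eq:RobustConditionPiecewise}. The delicate point is that the successor node $b$ may differ from $\gamma(x^+)$, so one must not assume $x^+$ lands in the cone of $b$; the argument works only because we bound $V(x^+)$ from above by the \emph{particular} $(x^+)^\top P_b x^+$ supplied by the chosen edge, rather than by the true minimizer. Formalizing that the cones $C_s$ are closed convex and cover $\R^n$ with disjoint interiors (handling the measure-zero boundary where ties in the $\argmin$ occur, consistent with the remark that $\Phi$ may be discontinuous) is the other technical care-point, but it is routine given the quadratic structure.
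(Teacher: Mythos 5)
Your proposal is correct and follows essentially the same route as the paper: define $W(x)=\min_{s\in S}x^\top P_s x$, use completeness of $\cG$ to pick an edge $(a,b,i)$ with $a=\gamma(x)$, and bound $W(x^+)$ by the particular term $(x^+)^\top P_b x^+$ to get strict decrease, then conclude UES by homogeneity. The only cosmetic difference is your attempt to package this as an invocation of Proposition~\ref{Thm:PCLFimpliesStability} --- strictly speaking the closed-loop maps $f_i(x)=(A_i+B_iK_{\gamma(x)})x$ do not satisfy the edge-wise PCLF inequalities (those hold for the gains $K_a$ tied to each edge's source node, not the state-dependent gain), but your direct decrease argument, which is exactly the paper's, already establishes that $W$ is a common Lyapunov function, so nothing is lost.
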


\begin{proof}
The proof is inspired by proof of \citep[Proposition III.1]{PhiAthAng}.
Consider $V_s:\R^n\to \R$ defined by $V_s(x)=x^\top P_s x$, function $W(x)=\min_{s\in S}\{V_s(x)\}$ and $f_{is}:\R^n\to \R^n$ defined by $f_{is}(x)=(A_i+B_iK_s)x$, for any $s\in S$ and any $i\in \M$. Since $P_s\succ0$ for all $s\in S$, the function $W:\R^n\to \R$ is positive definite and radially unbounded. Condition~\eqref{eq:RobustConditionPiecewise} can be rewritten as
\begin{equation}\label{eq:technicalIneq}
V_b(f_{ia}(x))< V_a(x),\;\;\forall x\in \R^n\setminus\{0\},\;\forall e=(a,b,i)\in E.
\end{equation}
Consider any $x\in \R^n$, $x\neq 0$ and any $i\in \M$. Suppose $\gamma(x)=a\in \argmin_{s\in S} \{x^\top P_sx\}$, and thus $W(x)=V_a(x)$.
By completeness of $\cG$, there exists a $b\in S$ such that $e=(a,b,i)\in E$, and thus we have
\[
\begin{aligned}
W(f_{ia}(x))&=\min_{s\in S}\{V_s(f_{ia}(x))\}\leq V_b(f_{ia}(x))\\&< V_a(x)=W(x).
\end{aligned}
\]
We have thus proved that the function $W:\R^n\to \R$ is a  Lyapunov function for the closed-loop system
\[
x(k+1)=f_{\sigma(k)}(x(k)):=\left(A_{\sigma(k)}+B_{\sigma(k)}K_{\gamma(x(k))}\right)x(k)
\]
for any $\sigma:\N\to \M$. Since the closed loop system is homogeneous of degree $1$ (i.e. $f_j(\lambda x)=\lambda f_j(x)$ for any $j\in \M$, for any $x\in \R^n$, for any $\lambda\in \R$) it is in particular UES, concluding the proof.
\end{proof}

Thanks to the completeness of $\cG$ and the particular structure of~\eqref{eq:RobustConditionPiecewise}, we can provide, in what follows, \emph{necessary and sufficient} LMI conditions for~\eqref{eq:RobustConditionPiecewise}.

\begin{lemma}\label{lem:MinCOnditionsWithShur}
Conditions~\eqref{eq:RobustConditionPiecewise} are satisfied if and only if there exist $\{\xbar{P}_s\}_{s\in S}\subset \bS^{n\times n}$, $\{ \xbar{K}_s\}_{s\in S}\subset \R^{m\times n}$ such that
\begin{equation}\label{eq:LMIConditionRobust}
\begin{bmatrix}
\xbar{P}_b & (A_i\xbar{P}_a+B_i \xbar{K_a}) \\ \star & \xbar{P}_a
\end{bmatrix}\succ 0,\;\;\;\;\forall\;e=(a,b,i)\in E.
\end{equation}
 Matrices $\{P_s\}_{s\in S}\subset \bS_+^{n\times n}$, $\{K_s\}_{s\in S}\subset \R^{m\times n}$ satisfying \eqref{eq:RobustConditionPiecewise} are then given by defining $P_s=\xbar{P}_{s}^{-1}$ and $K_s=\xbar{K_s}\xbar{P}_{s}^{-1}$, for any $s\in S$.  
\end{lemma}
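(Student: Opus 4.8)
The plan is to derive \eqref{eq:LMIConditionRobust} from \eqref{eq:RobustConditionPiecewise} through a sequence of \emph{reversible} operations — a congruence transformation, a bijective change of variables on the positive-definite cone, and a Schur complement — so that each implication is in fact an equivalence. Concretely, I would set $\xbar{P}_s := P_s^{-1}$ and $\xbar{K}_s := K_s P_s^{-1} = K_s \xbar{P}_s$ for every $s\in S$. Since $P_s\succ 0$, this is a bijection between families $\{P_s\}\subset\bS_+^{n\times n}$ and $\{\xbar{P}_s\}\subset\bS_+^{n\times n}$, and the recovery formulas stated in the lemma, $P_s=\xbar{P}_s^{-1}$ and $K_s=\xbar{K}_s\xbar{P}_s^{-1}$, are exactly its inverse.

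For the forward direction, fix an edge $e=(a,b,i)\in E$ and rewrite \eqref{eq:RobustConditionPiecewise} as $(A_i+B_iK_a)^\top P_b (A_i+B_iK_a)\prec P_a$. Applying the congruence $X\mapsto \xbar{P}_a X \xbar{P}_a$ to both sides — legitimate and inequality-preserving because $\xbar{P}_a=P_a^{-1}$ is invertible and symmetric — yields
$$\xbar{P}_a(A_i+B_iK_a)^\top P_b (A_i+B_iK_a)\xbar{P}_a \prec \xbar{P}_a .$$
Now $(A_i+B_iK_a)\xbar{P}_a = A_i\xbar{P}_a + B_i K_a\xbar{P}_a = A_i\xbar{P}_a + B_i\xbar{K}_a =: M_e$, and $P_b = \xbar{P}_b^{-1}$, so the inequality reads $\xbar{P}_a - M_e^\top \xbar{P}_b^{-1} M_e \succ 0$. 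As $\xbar{P}_b\succ 0$, a Schur complement with respect to the top-left block converts this into precisely \eqref{eq:LMIConditionRobust}, whose off-diagonal block is $M_e = A_i\xbar{P}_a + B_i\xbar{K}_a$.

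For the converse, suppose $\{\xbar{P}_s\}\subset\bS^{n\times n}$ and $\{\xbar{K}_s\}$ satisfy \eqref{eq:LMIConditionRobust}. The one point requiring care is that $\xbar{P}_s$ is only assumed \emph{symmetric}, so its positive definiteness must be recovered before inverting: positive definiteness of the $2\times 2$ block matrix forces each diagonal block, in particular $\xbar{P}_a\succ 0$; and since $\cG$ is \emph{complete}, every node $a\in S$ is the source of at least one edge, whence $\xbar{P}_s\succ 0$ for all $s\in S$. With this in hand one reads the three steps above backwards — Schur complement (valid as $\xbar{P}_b\succ 0$), the substitution $P_s=\xbar{P}_s^{-1}$, $K_s=\xbar{K}_s\xbar{P}_s^{-1}$, and the inverse congruence by $P_a=\xbar{P}_a^{-1}$ — to recover \eqref{eq:RobustConditionPiecewise} for every edge.

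The computation itself is entirely routine; the only genuine subtlety, and the step I would be most careful about, is this extraction of $\xbar{P}_s\succ 0$ at \emph{every} node from the LMI together with completeness of $\cG$, since that is what legitimizes the inversion $P_s=\xbar{P}_s^{-1}$ used throughout.
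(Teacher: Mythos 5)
Your proof is correct and follows essentially the same route as the paper's: the bijective change of variables $\xbar{P}_s=P_s^{-1}$, $\xbar{K}_s=K_sP_s^{-1}$, combined with a congruence transformation and a Schur complement, each step being reversible. The only difference is cosmetic (you apply the congruence to the $n\times n$ inequality before taking the Schur complement, whereas the paper works on the $2n\times 2n$ block form first), and you spell out the extraction of $\xbar{P}_s\succ 0$ from the diagonal blocks plus completeness of $\cG$, a detail the paper dismisses as ``easy to prove.''
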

\begin{proof}\textit{Sufficiency:}
Suppose \eqref{eq:LMIConditionRobust} holds; it is easy to prove that it implies $\xbar{P}_s\succ0$ for all $s\in S$. For any $e=(a,b,i)\in E$ we pre- and post-multiply \eqref{eq:LMIConditionRobust} by $diag(\xbar{P}_b^{-1},\xbar{P}_a^{-1})$, obtaining
\[
\begin{bmatrix}
\xbar{P}_{b}^{-1} & \xbar{P}_{b}^{-1}(A_i+B_i \xbar{K_a}\xbar{P}_{a}^{-1}) \\ \star & \xbar{P}_{a}^{-1}
\end{bmatrix}\succ 0,\;\;\;\;\forall\;e=(a,b,i)\in E.
 \]
By defining changes of variables $P_s=\xbar{P}_s^{-1}$ and $K_s=\xbar{K}_s\xbar{P}_s^{-1}$ for any $s\in S$ we obtain
\begin{equation} \label{eq:robustconditionSchur}
\begin{bmatrix}
{P}_{b} & {P}_{b}(A_i+B_i {K_a}) \\ \star & {P}_{a}
\end{bmatrix}\succ 0,\;\;\;\;\forall\;e=(a,b,i)\in E,
 \end{equation}
 which, by Schur complement, is equivalent to
\begin{equation} \label{eq:robustconditionPieceWise2}
    {P}_{a} - (A_i+B_i {K_a})^{\top} {P}_{b} {P}_{b}^{-1} {P}_{b}(A_i+B_i {K_a})
\succ 0,
\end{equation}
 $\forall\;e=(a,b,i)\in E$, leading to \eqref{eq:RobustConditionPiecewise}. 
 
 \textit{Necessity:} Suppose that \eqref{eq:RobustConditionPiecewise} holds. Rewrite it as \eqref{eq:robustconditionPieceWise2} and apply Schur complement again to obtain \eqref{eq:robustconditionSchur}. Next, apply a congruence transformation with $diag({P}_b^{-1},{P}_a^{-1})$ followed by the same changes of variables as before to obtain \eqref{eq:LMIConditionRobust}. Therefore, feasibility of \eqref{eq:RobustConditionPiecewise} implies satisfaction of \eqref{eq:LMIConditionRobust} with $\xbar{P}_s=P_s^{-1}$ and $K_s P_s^{-1}=\xbar{K}_s$.
\end{proof}

Proposition~\ref{Prop:RobustMachin} and the subsequent LMI characterization in Lemma~\ref{lem:MinCOnditionsWithShur} are stated for generic complete graphs. \textcolor{black}{In Definition~\ref{defn:DeBRunjii}, we introduced the class of De-Bruijn graphs, denoted by $\cH^l(M)$. In the following statement, as a corollary, we explicitly write the conditions of Lemma~\ref{lem:MinCOnditionsWithShur} associated to $\cH^l(M)$, proving the numerical scheme used in the following sections.}

\begin{cor}[De Bruijn conditions, Robust Case]\label{cor:DeBrunjiiRObust}
Consider $M\in \Zp$, a set $\cF=\{(A_i,B_i)\in \R^{n\times n}\times \R^{n\times m}\;\vert\;i\in \M\}$, and any $l\in \N$. Suppose there exist $\{\xbar{P}_{\wi} \}_{\wi\in \M^{l}}\subset \bS^{n\times n}$, $\{\xbar{K}_{\wi}\,\}_{\wi \in \M^{l}}\subset \R^{m\times n}$ such that, $\forall\wi=(i_1,\dots, i_{l})\in \M^{l}$ and $\forall \,h\in \M$, the inequalities
\begin{equation}\label{eq:LMIConditionRobustDeBrunjii}
\begin{bmatrix}
\xbar{P}_{(h,\wi^-)} & (A_h\xbar{P}_{\wi}+B_h \xbar{K}_{\wi}) \\ \star & \xbar{P}_{\wi}\
\end{bmatrix}\succ 0
\end{equation}
hold, where $\wi^-:=(i_1,i_2,\dots, i_{l-1})\in \M^{l-1}$.
Then the piecewise linear map $\Phi(x):={K}_{\gamma(x)}x$, $x\in \R^n$, where $K_{\wi}= \xbar{K}_{\wi} \xbar{P}_{\wi}^{-1}$, robustly exponentially stabilizes system~\eqref{eq:SwitchedSystemInput} where $\gamma:\R^n\to \M^{l}$ is a  function satisfying 
\[
\gamma(x)\in\argmin_{\wi\in \M^{l}} \{x^\top {P}_{\wi}x\}\;,\;\;\forall x\in \R^n,
\]
with ${P}_{\wi} = \xbar{P}^{-1}_{\wi}$ and for which $\Phi:\R^n\to \R^m$ is piecewise linear. Moreover, the function $W(x):=\min_{\wi \in \M^{l}} \{x^\top P_{\wi} x\}$ is a Lyapunov function for the closed loop system.
\end{cor}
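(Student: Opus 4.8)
The plan is to recognize Corollary~\ref{cor:DeBrunjiiRObust} as a direct specialization of Proposition~\ref{Prop:RobustMachin} and Lemma~\ref{lem:MinCOnditionsWithShur} to the particular complete graph $\cH^l(M)$. First I would invoke the fact, stated immediately after Definition~\ref{defn:DeBRunjii}, that the De Bruijn graph $\cH^l(M)=(S_{p,l},E_{p,l})$ is complete for every $M\in\Zp$ and $l\in\N$. This is the hypothesis needed to apply the two earlier results, so no independent stabilization argument is required; everything reduces to a careful translation of the abstract edge set $E$ into the explicit combinatorial structure of $\cH^l(M)$.

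The key step is to rewrite the generic LMI~\eqref{eq:LMIConditionRobust} over all edges $e=(a,b,i)\in E_{p,l}$ in the concrete index notation of the De Bruijn graph. By Definition~\ref{defn:DeBRunjii}, with $S_{p,l}=\M^l$, every edge has the form $(\wi,\wj,h)$ where $\wi=(i_1,\dots,i_l)$ is an arbitrary node, $h\in\M$ is an arbitrary label, and the target node is forced to be $\wj=(h,i_1,\dots,i_{l-1})=(h,\wi^-)$, using the shorthand $\wi^-=(i_1,\dots,i_{l-1})$. Substituting $a=\wi$, $b=\wj=(h,\wi^-)$, and $i=h$ into~\eqref{eq:LMIConditionRobust} yields exactly the inequality~\eqref{eq:LMIConditionRobustDeBrunjii}, now quantified over all $\wi\in\M^l$ and all $h\in\M$ instead of over the edge set. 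Thus feasibility of~\eqref{eq:LMIConditionRobustDeBrunjii} is precisely feasibility of~\eqref{eq:LMIConditionRobust} for the graph $\cG=\cH^l(M)$.

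Given this identification, I would then simply chain the two earlier results. Lemma~\ref{lem:MinCOnditionsWithShur} tells us that feasibility of~\eqref{eq:LMIConditionRobustDeBrunjii} is equivalent to~\eqref{eq:RobustConditionPiecewise} holding for $\cH^l(M)$, with the recovered matrices $P_{\wi}=\xbar P_{\wi}^{-1}$ and $K_{\wi}=\xbar K_{\wi}\xbar P_{\wi}^{-1}$. Proposition~\ref{Prop:RobustMachin}, applied to the same complete graph, then guarantees that the piecewise linear map $\Phi(x)=K_{\gamma(x)}x$, with $\gamma(x)\in\argmin_{\wi\in\M^l}\{x^\top P_{\wi}x\}$, robustly exponentially stabilizes~\eqref{eq:SwitchedSystemInput}, and that $W(x)=\min_{\wi\in\M^l}\{x^\top P_{\wi}x\}$ is a Lyapunov function for the closed loop. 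The claim about $W$ is already established inside the proof of Proposition~\ref{Prop:RobustMachin}, so I would just note that it specializes verbatim.

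Since the corollary is essentially a notational unpacking, there is no genuine analytic obstacle; the main point requiring care is purely bookkeeping, namely verifying that the correspondence between abstract edges and De Bruijn index triples is a bijection onto the quantifier ``$\forall\,\wi\in\M^l,\ \forall\,h\in\M$.'' I would make sure that each node $\wi$ together with each label $h$ produces exactly one outgoing edge $(\wi,(h,\wi^-),h)$, so that the family of inequalities~\eqref{eq:LMIConditionRobustDeBrunjii} is neither larger nor smaller than $E_{p,l}$, and that the completeness property (one outgoing edge per node and label) is exactly what makes the minimization-based feedback well defined in Proposition~\ref{Prop:RobustMachin}. With that verified, the proof is a one-line reduction to the preceding two results.
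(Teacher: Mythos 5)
Your proposal is correct and matches the paper's own (very short) proof, which likewise just observes that~\eqref{eq:LMIConditionRobustDeBrunjii} is the specialization of the LMIs~\eqref{eq:LMIConditionRobust} to the complete graph $\cH^l(M)$ and then concludes via Lemma~\ref{lem:MinCOnditionsWithShur} and Proposition~\ref{Prop:RobustMachin}. The only nitpick is that your parenthetical gloss of completeness as ``one outgoing edge per node and label'' conflates completeness (\emph{at least} one) with determinism (\emph{at most} one); this is harmless here since De Bruijn graphs have exactly one such edge, and only completeness is needed for the robust case.
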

\begin{proof}
Conditions in~\eqref{eq:LMIConditionRobustDeBrunjii} are the specification  of conditions~\eqref{eq:LMIConditionRobust}  when considering the graph $\cH^l(M)$ introduced in~Definition~\ref{defn:DeBRunjii}; by Proposition~\ref{Prop:RobustMachin} we conclude.
\end{proof}
\begin{rem}[Robust Stabilization of LPVs]\label{rem:RobustLPV}
It is important to point out that the stabilization techniques proposed in Proposition~\ref{Prop:RobustMachin} and then numerically tackled in Lemma~\ref{lem:MinCOnditionsWithShur} and Corollary~\ref{cor:DeBrunjiiRObust} for~\eqref{eq:SwitchedSystemInput} are effective also for the more general class of LPV systems as in~\eqref{eq:DiscTimeLPV}. 
Indeed, suppose that a piecewise linear control $\Phi:\R^n\to \R^m$ as in Proposition~\ref{Prop:RobustMachin} is given, it can be shown, by a convexity argument, that the \emph{convex difference inclusion}
$
x(k+1)\in \co\left\{ A_ix(k)+B_i\Phi(x(k))\;\vert\;i\in \M \right\}
$
is exponentially stable, for a direct proof, we refer to~\citep[Proposition 2]{BlaMiaSav07}. This, in particular, implies the exponential stability of~\eqref{eq:DiscTimeLPV}.
\hfill $\triangle$
\end{rem}

Concluding this section, we briefly illustrate an open direction of research. 
Corollary~\ref{cor:DeBrunjiiRObust} provides sufficient conditions for the robust feedback stabilizability of~\eqref{eq:SwitchedSystemInput}. It can be seen that, if the conditions related to the graph $\cH^l(M)$ are feasible, then also the conditions related to graphs $\cH^{l'}(M)$ are feasible, for any $l'\geq l$. Thus, by increasing the order $l\in \N$ one can reduce the conservatism of the proposed conditions (as we will illustrate in the next subsection). Such a hierarchy is known to provide non-conservative conditions (by increasing the order $l\in \N$) for the stability problem, see~\cite{AhmJun:14}. On the other hand, the question of whether this is also  the case for the robust stabilization problem studied here is still open. More precisely, future research will investigate the following claim:
\begin{conjecture}
Consider $M\in \Zp$ and a set $\cF=\{(A_i,B_i)\in \R^{n\times n}\times \R^{n\times m}\;\vert\;i\in \M\}$, and suppose that system~\eqref{eq:SwitchedSystemInput} is robust feedback stabilizable, in the sense of Definition~\ref{defn:StabNot}. Then, there exists a $l\in \N$ (large enough) for which the conditions in Corollary~\ref{cor:DeBrunjiiRObust} are feasible.
\end{conjecture}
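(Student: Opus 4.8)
The plan is to prove the conjecture as an \emph{asymptotic non-conservativeness} (converse) statement for the De Bruijn hierarchy, mirroring the known fact that in the autonomous case $B_i=0$ the quadratic path-complete relaxations on $\cH^l(M)$ converge to the joint spectral radius as $l\to\infty$. The first step would be to turn the hypothesis into a concrete Lyapunov object. By Proposition~\ref{prop:Charachterization}(A) and the polyhedral construction underlying it, robust feedback stabilizability (Definition~\ref{defn:StabNot}) is equivalent to the existence of a piecewise-linear robust feedback $\Phi$ and a \emph{polyhedral control Lyapunov function} $V$ that is contractive with some rate $\lambda\in[0,1)$, i.e. $V\big((A_i+B_i\Phi(\cdot))(x)\big)\le \lambda\,V(x)$ for every $i\in\M$ and $x\in\R^n$. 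This provides a single homogeneous, contractive closed loop and fixes the target object that the quadratics $\{P_{\wi}\}$ must eventually reproduce.

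The quantitative core would then be an $l$-step \emph{lifting} argument. A single quadratic cannot in general reproduce the polyhedral gauge $V$: sandwiching a polytope between two ellipsoids (John's theorem) costs a distortion of order $\sqrt{n}$, which may exceed the single-step contraction $\lambda$ and explains why low-order graphs can fail. Over a horizon of length $l$, however, the genuine contraction $\lambda^l$ eventually beats this \emph{one-time} penalty. Concretely, for each word $\wi=(i_1,\dots,i_l)\in\M^{l}$ I would assign a quadratic $P_{\wi}$ approximating, via an inscribed/circumscribed ellipsoid, the sublevel set of $V$ restricted to the cone of states whose optimal $l$-step history is $\wi$, and a gain $K_{\wi}$ read off from $\Phi$ on that cone. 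Each De Bruijn edge $(\wi,(h,\wi^-),h)$ encodes the one-step map under mode $h$ with gain $K_{\wi}$, so the inequalities~\eqref{eq:LMIConditionRobustDeBrunjii} would be checked by composing the $\lambda^l$-contraction of $V$ with the bounded ellipsoidal distortion and choosing $l$ large enough that $\lambda^l\cdot(\text{distortion})<1$. Crucially, the min-selection $\gamma(x)\in\argmin_{\wi}x^\top P_{\wi}x$ of Corollary~\ref{cor:DeBrunjiiRObust} is exactly what keeps the realized controller a genuine \emph{state-based} (switching-unobservable) map, as the robust setting demands.

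The step I expect to be the main obstacle is the intrinsic \emph{coupling} between the matrices $P_{\wi}$ and the gains $K_{\wi}$ forced by the min-of-quadratics selection. One cannot first freeze $\Phi$ and then search for word-dependent gains: the single map $\Phi$ would over-constrain the problem and remove precisely the flexibility that increasing $l$ is meant to supply. Conversely, the partition $\{x:\gamma(x)=\wi\}$ induced by the chosen quadratics must be compatible, on each cone and \emph{simultaneously for all modes} $h\in\M$, with the one gain $K_{\wi}$ that labels every edge leaving node $\wi$. In the autonomous case this coupling is absent (no gains), and the converse collapses to a joint-spectral-radius approximation bound of the form $\rho_l\le n^{1/(2l)}\rho$; with control it is unclear that a finite min-of-quadratics with the rigid De Bruijn edge structure can at once approximate an arbitrary polyhedral control-Lyapunov function and furnish consistent per-node gains. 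A secondary difficulty is that the ellipsoidal approximation must be carried out over the cones of a data-dependent partition whose geometry is not controlled a priori, so the \emph{uniform} distortion bound needed to fix $l$ is not immediately available.

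As a fallback route I would introduce a stabilizability radius $\rho^\ast$ (the infimal achievable closed-loop decay) together with the values $\rho_l$ certified by~\eqref{eq:LMIConditionRobustDeBrunjii}, observe that $\rho_l$ is monotone in $l$ and bounded below by $\rho^\ast$, and try to prove $\rho_l\to\rho^\ast$ by a compactness argument on the $l$-step control problems; feasibility for large $l$ would then follow from $\rho^\ast<1$. This recasts the entire difficulty as a quantitative converse-Lyapunov estimate for \emph{controlled} homogeneous switched systems, analogous to the $n^{1/(2l)}$ factor available in the autonomous theory, which I regard as the decisive missing ingredient.
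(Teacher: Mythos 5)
Be aware first that the paper offers no proof of this statement: it is presented explicitly as a \emph{Conjecture} and announced as a direction for future research, so there is no author proof to compare yours against. Your text is, by your own account, a program rather than a proof, and the program as sketched does not close. The initial reduction is fine: via Proposition~\ref{prop:Charachterization}, robust stabilizability yields a polyhedral control Lyapunov function $V$ that is $\lambda$-contractive under a piecewise linear $\Phi$. The gap is everything afterwards. The ``$\lambda^l$ beats a one-time John-ellipsoid distortion'' step is an analogy with the autonomous case $B_i=0$, where the $n^{1/(2l)}$ bound holds precisely because there are no gains to synthesize. In Corollary~\ref{cor:DeBrunjiiRObust} each node $\wi\in\M^{l}$ carries a \emph{single} gain $K_{\wi}$ that must verify the decrease inequality along \emph{every} outgoing edge, i.e.\ for all modes $h\in\M$ simultaneously, and the inequalities \eqref{eq:LMIConditionRobustDeBrunjii} are required on all of $\R^n$, not only on the cone $\{x:\gamma(x)=\wi\}$ where that gain is actually used. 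Reading $K_{\wi}$ ``off from $\Phi$'' is not well defined: the conic partition induced by $\argmin_{\wi}x^\top P_{\wi}x$ need not refine the polyhedral partition on which $\Phi$ is linear, so a single linear gain per node has to replace possibly many linear pieces of $\Phi$ at once. You correctly flag this coupling as the main obstacle, but flagging it does not discharge it; and the fallback ($\rho_l\downarrow\rho^\ast$ by ``compactness'') is a restatement of the conjecture, not an argument.

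There is, moreover, concrete evidence in the paper that the analogy you lean on fails in the controlled setting. Section~\ref{Sec:ModeDep} recalls the negative result of Lee and Khargonekar: there exist mode-dependent stabilizable systems for which the De Bruijn conditions of Corollary~\ref{cor:DeBrunjiiGain} are infeasible for \emph{every} $l$, even though those conditions have strictly more degrees of freedom than \eqref{eq:LMIConditionRobustDeBrunjii} (a gain per node--mode pair rather than per node). Nothing in your sketch uses robustness of $\Phi$ in an essential way, so if the John-ellipsoid lifting argument worked it would transfer verbatim to the mode-dependent case (read $K_{\wi,h}$ off $\Phi_h$ on each cone), contradicting that theorem. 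Hence the argument must break exactly where you suspect it does. The robust conjecture itself is not refuted by that counterexample, since its hypothesis is stronger, but any successful proof would have to exploit robust stabilizability in a way that is unavailable under mere mode-dependent stabilizability --- and your sketch does not identify any such mechanism.
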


\subsection{Numerical Examples: Robust Case}
In this subsection, we provide some numerical examples in order to illustrate the results presented in Subsection~\ref{sec:RobustSubSec}, and the advantages of the proposed techniques with respect to existing literature.

\subsubsection{Example 1}~\label{ex:1}
Consider system~\eqref{eq:IntroSystem} with matrices 
{
\begin{equation*}
    A_1 = \begin{bmatrix}
        0.1 & 0.9 \\ 0 & 0.1
    \end{bmatrix},\; A_2 = \begin{bmatrix}
        \alpha & 0 \\ 1 & \alpha
    \end{bmatrix},\, B_1 = \begin{bmatrix}
        0 \\ 0
    \end{bmatrix},\, B_2 = \begin{bmatrix}
        \alpha \\ -1
    \end{bmatrix}. 
\end{equation*}}
where $\alpha>0$ is a free parameter. The goal is to find the maximal $\alpha$ for which conditions of Corollary~\ref{cor:DeBrunjiiRObust} are feasible, thus providing feedback gains for which the closed-loop system is UES. The stabilization of this system was also studied in~\cite[Example~2]{Lee06}, where the authors design ``finite-path dependent state feedback controllers'', that is, state-feedback controllers that depend on the history of the switching signal~$\sigma$. The maximum value of $\alpha$ for which stability is achieved in~\cite{Lee06} is $\alpha = 0.6667$ by using controllers with memory. If no memory is employed, the maximum obtained value in~\cite{Lee06} is $\alpha = 0.6173$. 
\begin{table}[t!]
\centering
\caption{Relation between maximum $\alpha$ and graph order $l$ for Example~2.}
\begin{tabular}{l|lllll}
                           &\cellcolor{gray!20}   $l=0$ & \cellcolor{gray!20} $l=1$ & \cellcolor{gray!20} $l=2$ &\cellcolor{gray!20}  $l=3$ & \cellcolor{gray!20} $l=5$  \\ 
\hline
max $\alpha$ & 0.6646  & 0.6684  & 0.6856  & 0.6984   & 0.7032 
\end{tabular}\label{tab:exampleRobust}
\end{table}
By using the conditions of Corollary~\ref{cor:DeBrunjiiRObust}, we obtain that the maximum value obtained is $\alpha = 0.5872$ for the case of the De Bruijn graph of order $l=0$, and $\alpha = 0.6667$ for all $l \geq 1$. Thus, with the strategy developed here, we can robustly stabilize the system for an $\alpha$ that is as large as the maximum $\alpha$ obtained in~\cite{Lee06} by using memory-dependent controllers, which require knowledge/measurement of switching signals. In conclusion, our robust feedback strategy is as conservative as the memory-dependent strategy in~\cite{Lee06}. This was expected since the underlying conditions of~\cite{Lee06} are substantially equivalent to the ones in~Corollary~\ref{cor:DeBrunjiiRObust}. Our graph approach allows us to provide a feedback control map in a closed form as defined in Proposition~\ref{Prop:RobustMachin}, without the necessity of storing the past active modes.

\begin{figure*}[b!]
\begin{align}\label{eq:Pmatrices}
\begin{split}
    &P_{11} = \begin{bmatrix} \phantom{-}1.7981  &  \phantom{-}0.5562 \\
    \phantom{-}0.5562  &  \phantom{-}0.9599 \end{bmatrix},
    P_{12} = \begin{bmatrix}  \phantom{-}1.6821   & \phantom{-}0.4795 \\
    \phantom{-}0.4795  &  \phantom{-}0.8672 \end{bmatrix},
    P_{13} = \begin{bmatrix}  \phantom{-}1.9114   & \phantom{-}0.5837 \\
   \phantom{-}0.5837 &  \phantom{-}0.7375 \end{bmatrix},\\ 
    &P_{21} = \begin{bmatrix}  \phantom{-}1.6331  & -0.6584\\
   -0.6584 &  \phantom{-}1.1943
 \end{bmatrix},
    P_{22} = \begin{bmatrix} \phantom{-}1.4815 &  -0.4356\\
   -0.4356  & \phantom{-}0.9449 \end{bmatrix},
    P_{23} = \begin{bmatrix} \phantom{-}1.9377  & -0.3945\\
   -0.3945 &  \phantom{-}0.5639 \end{bmatrix},\\
    &P_{31} = \begin{bmatrix} \phantom{-}1.7896 &  -0.4346\\
   -0.4346 &  \phantom{-}0.7507 \end{bmatrix},
    P_{32} = \begin{bmatrix} 29.8984 &  -6.6345\\
   -6.6345  & \phantom{-}1.8047 \end{bmatrix},
    P_{33} = \begin{bmatrix} 24.6378 &  -5.4361\\
   -5.4361   & \phantom{-}1.5320 \end{bmatrix}.
   \end{split}
\end{align}
\vspace{-0.5cm}
\begin{align}\label{eq:Kmatrices}
\begin{split}
    &K_{11} = \begin{bmatrix} -0.4816 &  -0.3583 \end{bmatrix},
    K_{12} = \begin{bmatrix} -0.4660  & -0.3460 \end{bmatrix},
    K_{13} = \begin{bmatrix} -0.4861 &  -0.3628 \end{bmatrix},\\
    &K_{21} = \begin{bmatrix} -0.3591 &  -0.5802 \end{bmatrix},
    K_{22} = \begin{bmatrix} -0.3652 &  -0.5817 \end{bmatrix},
    K_{23} = \begin{bmatrix} -0.4011 &  -0.6044 \end{bmatrix},\\
    &K_{31} = \begin{bmatrix} -0.4124 &  -0.5761 \end{bmatrix},
    K_{32} = \begin{bmatrix} -1.0548 &  -0.4989 \end{bmatrix},
    K_{33} = \begin{bmatrix} -0.9970 &  -0.5116 \end{bmatrix}.
    \end{split}
\end{align}
\end{figure*}

\subsubsection{Example 2}
We modify the system from Example~$1$ by setting $B_1 = [
    1 \;\, 0]^{\top}$. In this case, the maximum $\alpha$ obtained for which the system can be stabilized as a function of the order $l$ of the chosen De Bruijn graph is illustrated in Table~\ref{tab:exampleRobust}. As demonstrated by the table, an increase in $l$ tends to improve the robustness to the uncertainty $\alpha$. 

Given $l=2$ and $\alpha=0.6856$, for any $\wi\in \langle 2\rangle ^2$, we obtain the following Lyapunov matrices $P_{\wi}$ and controllers $K_{\wi}$: 
\begin{align*}
   P_{11}& = \begin{bmatrix}   1.9133  &  0.2510 \\
    0.2510 &   1.7315 \end{bmatrix},
    ~P_{12} = \begin{bmatrix} 1.6753  &  0.3818 \\
    0.3818 &   1.1150\end{bmatrix},\\
    ~P_{21}& = \begin{bmatrix} 3.2896  & -0.0436 \\
   -0.0436 &   0.5803 \end{bmatrix},
    ~P_{22} = \begin{bmatrix} 2.0241   & 0.4344 \\
    0.4344  &  0.5322 \end{bmatrix}, \\ 
    K_{11} &= \begin{bmatrix} -0.3162 & -0.3347 \end{bmatrix},
    ~K_{12} = \begin{bmatrix} -0.3342 & -0.3596 \end{bmatrix}, \\ 
    ~K_{21} &= \begin{bmatrix} 0.6279 & -0.4343 \end{bmatrix},
    ~K_{22} = \begin{bmatrix} -0.0714 & -0.4110 \end{bmatrix}.
\end{align*}

\subsubsection{Example 3}
This example illustrates the relationship between the De Bruijn graph order $l$ and the closed-loop performance, given by the minimum $\gamma \in [0,1)$ that satisfies $W(x(k+1)) \leq \gamma^2\, W(x)$, where $W$ is the Lyapunov function defined by $W(x):=\min_{\wi\in \M^{l}}\{x^\top P_{\wi} x\}$ given in Corollary~\ref{cor:DeBrunjiiRObust}. This $\gamma$ represents an upper bound for the exponential decay rate of the solutions of the closed loop, recall Definition~\ref{Def:UGASandUES}. The conditions of Corollary~\ref{cor:DeBrunjiiRObust} can be straightforwardly modified to tackle this problem by multiplying the $(2,2)$-block of the matrix in~\eqref{eq:LMIConditionRobustDeBrunjii} by $\gamma^2$; then a line search is performed to minimize the parameter $\gamma$. Consider the system defined by the following matrices:
{
\begin{gather*}
    A_1 = \begin{bmatrix}  \phantom{-}0 & \;1 \\ -1 & \;0  \end{bmatrix},  A_2 = \begin{bmatrix}  -1 & \;0 \\ \phantom{-}0 & \;-0.95  \end{bmatrix},  B_1 = B_2 = \begin{bmatrix}  1 \\ 0  \end{bmatrix}.
\end{gather*}}
In Table~\ref{tab:exampleDecayRate}, we summarize the relationship between the graph order $l$ and minimum decay rate $\gamma$ achieved. 
Moreover, Fig.~\ref{fig:sublevelsets} illustrates the sub-level set of the Lyapunov function $W$ given by $\mathcal{E}_{_W}(x)=\{x \in \R^n : W(x) \leq 1\}$ and its relationship with the graph order $l$. The level sets of $W(x)$ are marked in red in the figure, while the sub-level sets defined of each function $V_{s}(x)$ are illustrated in black. 
\begin{table}[t!]
\centering
\caption{Relation between minimum upper bound on the decay rate $\gamma$ and graph order $l$ for Example~3.}
\begin{tabular}{l|lllll}
                           &\cellcolor{gray!20}   $l=0$ & \cellcolor{gray!20} $l=2$ & \cellcolor{gray!20} $l=4$ &\cellcolor{gray!20}  $l=6$ & \cellcolor{gray!20} $l=10$  \\ 
\hline
min $\gamma$ & 0.9638  & 0.9612  & 0.9586  & 0.9571   & 0.9555  
\end{tabular}\label{tab:exampleDecayRate}
\end{table}

\begin{figure*}[t!]
\centering
\begin{subfigure}{.3\linewidth}
    \centering
    {\includegraphics[width=\linewidth,angle=0]{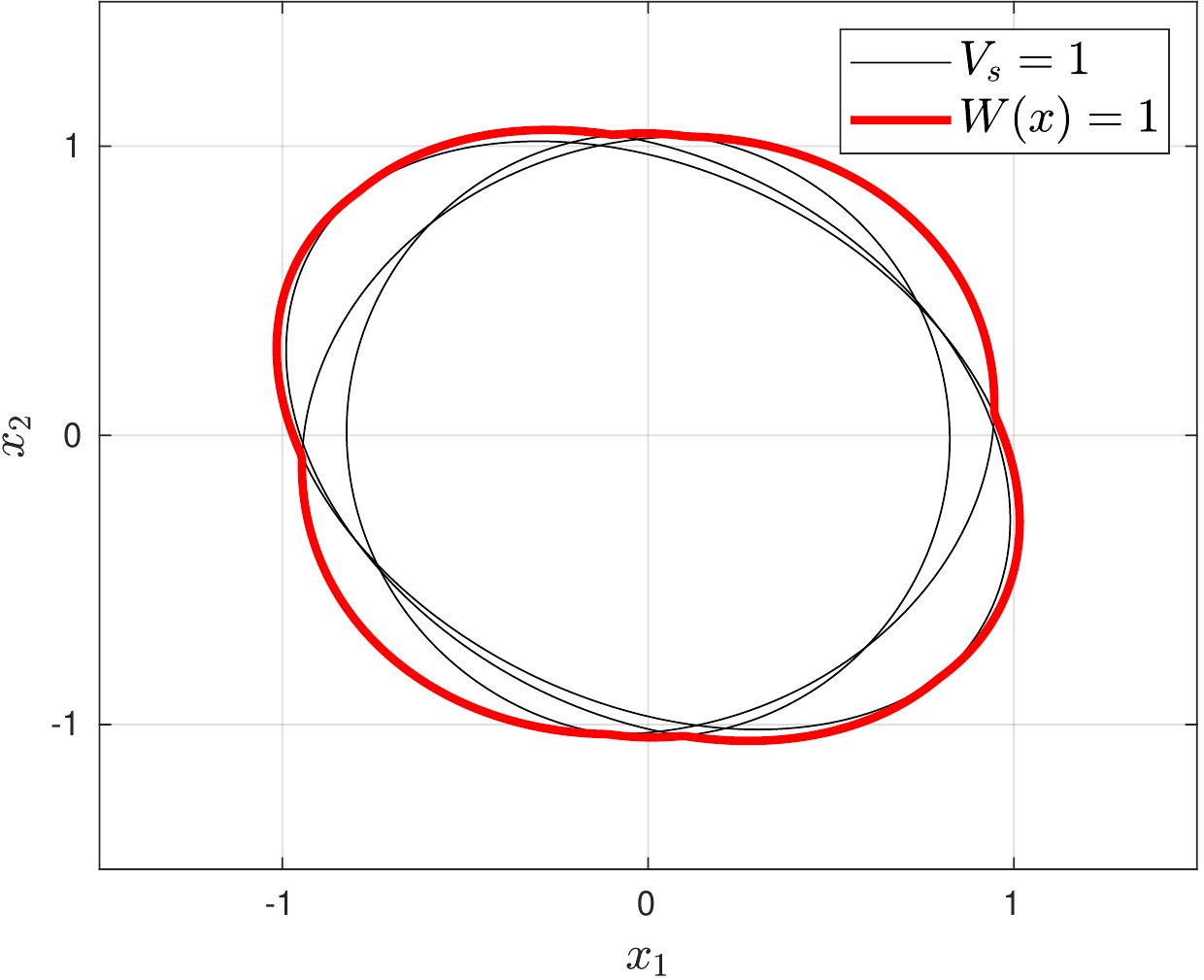}}
    \caption{$l=2$}\label{fig:a}
\end{subfigure}
    \hfill
\begin{subfigure}{.3\linewidth}
    \centering
   {\includegraphics[width=\linewidth,angle=0]{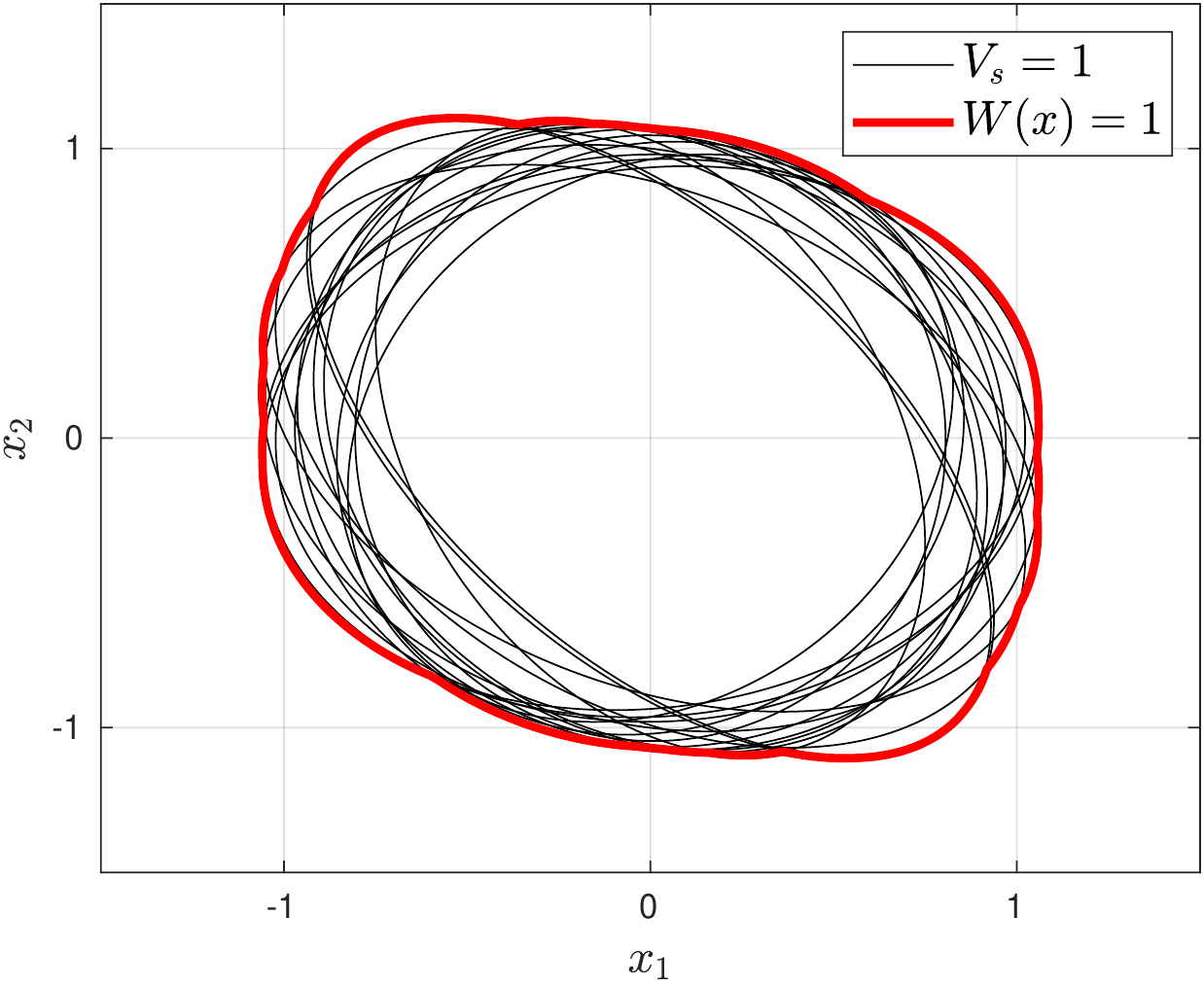}}
    \caption{$l=4$}\label{fig:b}
\end{subfigure}
   \hfill
\begin{subfigure}{.3\linewidth}
    \centering
    {\includegraphics[width=\linewidth,angle=0]{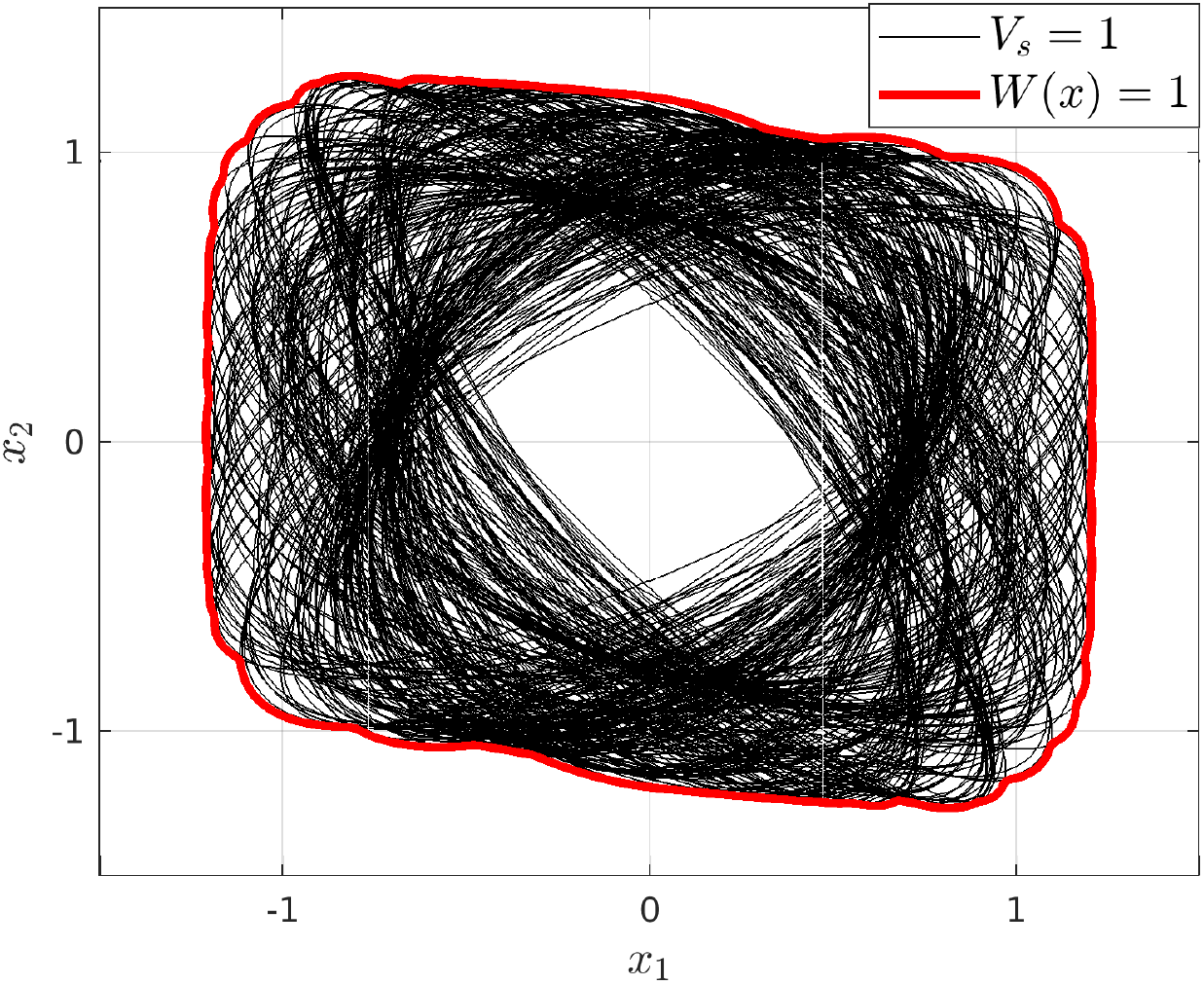}}
    \caption{$l=8$}\label{fig:c}
\end{subfigure}
\RawCaption{\caption{Level sets for the obtained Lyapunov functions $W(x)$ for different graph orders $l$ in Example~3.}
\label{fig:sublevelsets}}
\end{figure*}

\subsubsection{Example 4}
Consider a system defined by $B_1 = B_2 = B_3 = [1~\;0]^\top$,
{
\begin{align*}
    A_1 = \begin{bmatrix} 1 & 0.2 \\ 0 & 0.5 \end{bmatrix},
    A_2 = \begin{bmatrix} 1.1 & 0.2 \\ 0  & -0.5 \end{bmatrix},
    A_3 = \begin{bmatrix} 0.5 & 0.8 \\ 1.1 & 0.5 \end{bmatrix}.
\end{align*}}
This example illustrates the geometric intuition behind the \textit{min of quadratics} strategy that selects the linear controller at each instant of time. By choosing $l=2$, the conditions of Corollary~\ref{cor:DeBrunjiiRObust} are feasible and thus we find the Lyapunov function $W(x)=\min_{\wi \in \langle 3 \rangle^2}\{x^\top P_{\wi}x\}$, which guarantees a decay rate of $\gamma=0.7536$, defined by the matrices $P_{\wi}$, for $\wi\in \langle 3 \rangle^2$ in~\eqref{eq:Pmatrices}. The obtained controller gains are given in~\eqref{eq:Kmatrices}.

In Fig.~\ref{fig:cones} we have plotted the 1-level set of $W(x)$, and one closed-loop trajectory, starting at the initial condition $x_0 = [0.4 \;\, 1.4]^{\top}$ and following a periodic switching sequence $\sigma = \{1,2,3,1,2,3\dots\}$. As previously illustrated, the level set of $W(x)$ correspond to the union of the level sets of the multiple $V_{\wi}$, $\wi\in  \langle 3 \rangle^2$. Recalling Definition~\ref{defn:piecwiseLinear}, the cones defining the partition associated to the piecewise linear feedback map $\Phi$ in Proposition~\ref{Prop:RobustMachin} are defined by the \emph{argmin} among a set of quadratic functions, see~\eqref{eq:argminDefinition}. That is, in such cones, the minimum is attained for the same $s\in S= \langle 3 \rangle^2$, i.e. $W(x)=V_s(x)$ is achieved. For example, the black cone in Fig.~\ref{fig:cones} represents the cone where the value  of $W(x)$ coincides with $V_{33}(x)$. Whenever the state $x$ belongs to this cone, the controller gain $K_{33}$ is activated.  For the plotted trajectory, only three controllers have been active, namely $K_{13}$, $K_{21}$, and $K_{33}$. However, the state can fall within different cones where other controller gains will be activated for different initial conditions and switching signals.
In conclusion, from a geometric perspective, an increase in the graph order $l$ tends not only to \textit{refine} the level set of $W(x)$ but also to produce more cones so that different controller gains better adapted to certain regions of the state space are used, thus producing an improved control law.

\begin{figure}
    \centering
    {\includegraphics[width=0.5\linewidth]{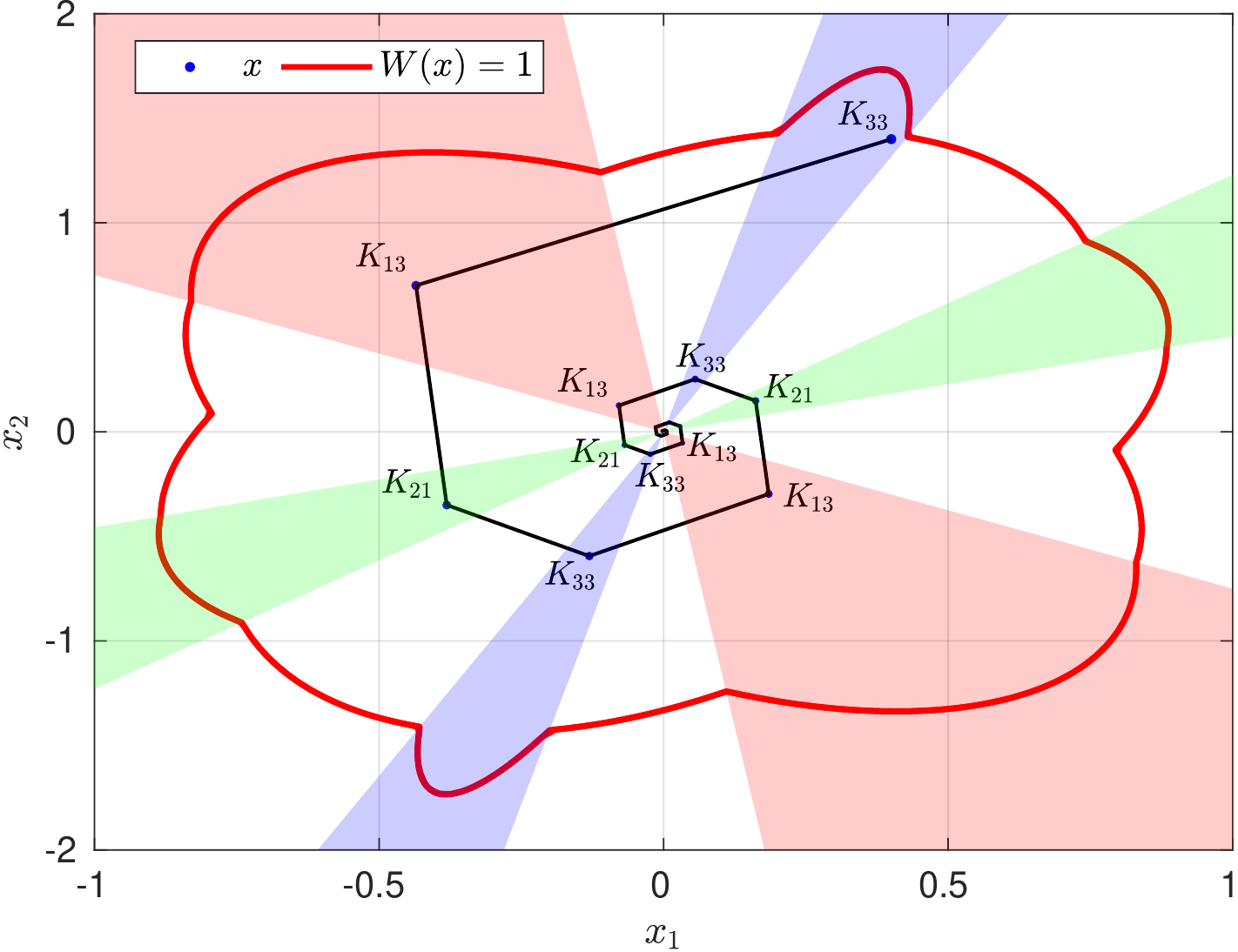}}
    \caption{Level set of~$W(x)$, one trajectory of the closed-loop system, and the controller used at each point.}
    \label{fig:cones}
\end{figure}

\section{Mode-Dependent Case}\label{Sec:ModeDep}
In this section we provide our main stabilization results in the mode-dependent case.
\subsection{ Piecewise Linear  Mode-Dependent Feedbacks}\label{sec:ModeDepWithMin}
In this subsection, adapting the proof technique of Proposition~\ref{Prop:RobustMachin}, we propose conditions depending on a  graph structure, leading to piecewise-linear mode-dependent feedback gains.
To this aim, we require an additional property on the underlying graphs, and we thus introduce the following definition.

\begin{defn}[Deterministic Graph]\label{defn:DetGraph}
A graph $\cG=(S,E)$ on $\M$ is said to be \emph{deterministic} if, for all $a\in S$ and $i\in \M$ there exists at most one $b\in \M$ such that $e=(a,b,i)\in E$.
\end{defn}

\begin{prop}\label{Prop:COmpleteModeDependent}
Consider $M\in \Zp$, a set $\cF=\{(A_i,B_i)\in \R^{n\times n}\times \R^{n\times m}\;\vert\;i\in \M\}$ and a \emph{complete and deterministic} graph $\cG=(S,E)$ on $\M$. Suppose there exist $\{P_s\}_{s\in S}\subset \bS_+^{n\times n}$, $\{K_{s,j}\}_{(s,j)\in S\times \M}\subset \R^{m\times n}$ such that
\begin{equation}\label{eq:RobustConditionPiecewiseModeDep}
(A_i+B_iK_{a,i})^\top P_b (A_i+B_iK_{a,i})-P_a\prec 0,
\end{equation}
$\forall e=(a,b,i)\in E$. Then, the piecewise linear maps $\Phi_i(x):=K_{\gamma(x),i}\,x$, for $i\in \M$,  exponentially stabilize system~\eqref{eq:SwitchedSystemInput}(in the mode-dependent sense of Definition~\ref{defn:StabNot}) where $\gamma:\R^n\to S$ is any function satisfying
\[
\gamma(x)\in\argmin_{s\in S} \{x^\top P_sx\},\;\;\;\forall\;x\in \R^n,
\]
and for which the functions $\Phi_i:\R^n\to \R^m$ are piecewise linear.
\end{prop}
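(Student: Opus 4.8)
The plan is to replicate, almost verbatim, the argument of Proposition~\ref{Prop:RobustMachin}, the only new ingredient being that the feedback gain now carries the extra index of the active mode. First I would set $V_s(x):=x^\top P_sx$ for each $s\in S$, let $W(x):=\min_{s\in S}V_s(x)$, and introduce the mode-dependent closed-loop maps $f_{i,a}(x):=(A_i+B_iK_{a,i})x$ for $(a,i)\in S\times\M$. Since every $P_s\succ 0$, the candidate $W$ is positive definite and radially unbounded. The hypothesis~\eqref{eq:RobustConditionPiecewiseModeDep} is exactly the statement that $V_b(f_{i,a}(x))<V_a(x)$ for every $x\neq0$ and every edge $(a,b,i)\in E$, so the conditions are already in the form of Lyapunov-type inequalities along the edges of $\cG$.

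The core step is the strict decrease of $W$ along the closed loop~\eqref{eq:modeDepSys} driven by $\Phi_i(x)=K_{\gamma(x),i}x$. Fix $x\neq 0$, set $a:=\gamma(x)$ so that $W(x)=V_a(x)$, and fix the active mode $i\in\M$. The point to check carefully is that the gain actually applied, $K_{\gamma(x),i}=K_{a,i}$, coincides with the gain appearing in the edge inequality for a suitable outgoing edge: by completeness of $\cG$ there is at least one $b\in S$ with $(a,b,i)\in E$, and the inequality attached to that edge is precisely $V_b(f_{i,a}(x))<V_a(x)$. Hence
\[
W(f_{i,a}(x))=\min_{s\in S}V_s(f_{i,a}(x))\leq V_b(f_{i,a}(x))<V_a(x)=W(x),
\]
so $W$ strictly decreases no matter which mode is active, i.e. $W$ is a common Lyapunov function for the closed loop for every $\sigma:\N\to\M$.

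Finally I would invoke homogeneity: each $f_{i,a}$ is linear, so the closed loop is homogeneous of degree $1$, and $\gamma(\lambda x)=\gamma(x)$ for $\lambda>0$ because scaling the argument does not change the $\argmin$ of the quadratics; therefore each $\Phi_i$ is homogeneous of degree $1$ and piecewise linear in the sense of Definition~\ref{defn:piecwiseLinear}, the defining cones being the regions where a common index attains the minimum. Combining the strict decrease of the homogeneous $W$ with the homogeneity argument used at the end of the proof of Proposition~\ref{Prop:RobustMachin} then yields uniform exponential stability directly.

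The one genuinely new subtlety, and the reason the graph is now required to be \emph{deterministic} (Definition~\ref{defn:DetGraph}) rather than merely complete, is the bookkeeping of the gains: I expect the main obstacle to be the matching between the index $a=\gamma(x)$ selected for the active mode $i$ and a certifying outgoing edge. Completeness supplies the \emph{existence} of such a $b$, which is all the bare Lyapunov computation above actually needs; determinism supplies its \emph{uniqueness}, so that each mode-dependent gain $K_{a,i}$ is tied to a single edge inequality instead of being forced to satisfy several simultaneously. This is what keeps the conditions non-redundant and is the property that will make the subsequent LMI reformulation (the analogue of Lemma~\ref{lem:MinCOnditionsWithShur}, obtained by a Schur complement and a congruence with $\mathrm{diag}(P_a^{-1},I)$ that rewrites $K_{a,i}$ as $\xbar{K}_{a,i}=K_{a,i}P_a^{-1}$) well posed. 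Once this matching is in place, the remaining estimates are routine.
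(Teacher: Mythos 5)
Your proposal is correct and follows essentially the same route as the paper's own proof: define $V_s$, $W=\min_s V_s$, and the maps $f_{i,a}$, use completeness to produce an outgoing edge certifying the strict decrease of $W$, and conclude by homogeneity, with determinism serving only to make the gain indexing $K_{a,i}$ well posed rather than being needed for the Lyapunov computation itself. Your remark that completeness alone suffices for the decrease argument while determinism removes redundancy in the edge inequalities is exactly the point the paper makes in its opening sentence of the proof.
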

\begin{proof}
First of all, since $\cG$ is complete and deterministic, for any $a\in S$ and any $i\in \M$, there exists a \emph{unique} $b\in S$ such that $e=(a,b,i)\in E$ and thus the notation $K_{s,i}$ is well posed; then the proof fundamentally follows the structure of the proof of Proposition~\ref{Prop:RobustMachin}.
Define $V_s:\R^n\to \R$ by $V_s(x)=x^\top P_s x$, function $W(x)=\min_{s\in S}\{V_s(x)\}$ and $f_{is}:\R^n\to \R^n$ by $f_{is}(x)=(A_i+B_iK_{s,i})x$, for any $s\in S$ and any $i\in \M$. Again,~\eqref{eq:RobustConditionPiecewiseModeDep} implies that $W:\R^n\to \R$ is positive definite and radially unbounded, since $P_s\succ0$ for all $s\in S$. With this notation, condition~\eqref{eq:RobustConditionPiecewiseModeDep} implies inequalities~\eqref{eq:technicalIneq}, and thus, following the reasoning of proof of Proposition~\ref{Prop:RobustMachin}, we  prove that the function $W:\R^n\to \R$ is a Lyapunov function for the closed-loop system
$
x(k+1)=\left(A_{\sigma(k)}+B_{\sigma(k)}K_{\gamma(x(k)),\sigma(k)}\right)x(k)
$
for any $\sigma:\N\to \M$, concluding the proof.
\end{proof}

Next, we present \emph{necessary and sufficient} LMI conditions ensuring~\eqref{eq:RobustConditionPiecewiseModeDep}. 

\begin{lemma}\label{lem:LMIConditionsWithShurPiecewiseModeDep}
Conditions~\eqref{eq:RobustConditionPiecewiseModeDep} are satisfied if and only if there exist $\{\xbar{P}_s\}_{s\in S}\subset \bS^{n\times n}$, $\{ \xbar{K}_{s,i}\}_{s\in S,i \in \M}\subset \R^{m\times n}$ such that the LMIs
\begin{equation}\label{eq:LMIConditionPiecewiseModeDep}
\begin{bmatrix}
\xbar{P}_b & (A_i\xbar{P}_a+B_i \xbar{K}_{a,i}) \\ \star & \xbar{P}_a
\end{bmatrix}\succ 0,\;\;\;\;\forall\;e=(a,b,i)\in E,
\end{equation}
are feasible. Matrices $\{P_s\}_{s\in S}\subset \bS^{n\times n}$, $\{K_s\}_{s\in S}\in \R^{m\times n}$ satisfying \eqref{eq:RobustConditionPiecewise} are then given by defining $P_s=\xbar{P}_{s}^{-1}$ and $K_{s,i}=\xbar{K}_{s,i}\xbar{P}_{s}^{-1}$.  
\end{lemma}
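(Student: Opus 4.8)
The plan is to mirror the proof of Lemma~\ref{lem:MinCOnditionsWithShur} essentially verbatim, the only difference being that the auxiliary gain attached to the source node $a$ of an edge $e=(a,b,i)$ is now the mode-dependent variable $\xbar{K}_{a,i}$ in place of $\xbar{K}_a$. Since both~\eqref{eq:RobustConditionPiecewiseModeDep} and~\eqref{eq:LMIConditionPiecewiseModeDep} are stated edge-by-edge, and for each edge the relevant gain $K_{a,i}$ is determined precisely by the source node $a$ and the label $i$, every congruence and Schur-complement operation acts independently on each edge and transfers without modification. The change of variables is $P_s=\xbar{P}_s^{-1}$ and $K_{s,i}=\xbar{K}_{s,i}\xbar{P}_s^{-1}$ for all $s\in S$, $i\in\M$.

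For sufficiency, I would start from~\eqref{eq:LMIConditionPiecewiseModeDep}. Positive definiteness of each $2\times 2$ block matrix forces $\xbar{P}_s\succ0$ for all $s\in S$, so the inverses are well defined. Fixing an edge $e=(a,b,i)$, I would apply the congruence transformation by $\mathrm{diag}(\xbar{P}_b^{-1},\xbar{P}_a^{-1})$ and substitute $P_s=\xbar{P}_s^{-1}$, $K_{s,i}=\xbar{K}_{s,i}\xbar{P}_s^{-1}$, obtaining the mode-dependent analogue of~\eqref{eq:robustconditionSchur}, namely $\begin{bmatrix} P_b & P_b(A_i+B_iK_{a,i}) \\ \star & P_a \end{bmatrix}\succ0$. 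Taking the Schur complement with respect to the $(1,1)$-block $P_b$ then returns exactly~\eqref{eq:RobustConditionPiecewiseModeDep}.

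For necessity, I would traverse the same chain of equivalences backwards: rewrite~\eqref{eq:RobustConditionPiecewiseModeDep} in Schur-complement form, apply the Schur complement to reach the block inequality displayed above, and finally perform the congruence by $\mathrm{diag}(P_b^{-1},P_a^{-1})$ together with the substitutions $\xbar{P}_s=P_s^{-1}$ and $\xbar{K}_{s,i}=K_{s,i}P_s^{-1}$ to recover~\eqref{eq:LMIConditionPiecewiseModeDep}. I do not expect any genuine obstacle: the manipulations are identical in form to those of Lemma~\ref{lem:MinCOnditionsWithShur}, and in particular no additional hypothesis beyond invertibility of the $P_s$ is needed. The only point worth a line of verification is the bookkeeping of indices — namely that the substitution $K_{s,i}=\xbar{K}_{s,i}\xbar{P}_s^{-1}$ is consistent across all edges sharing a given source-label pair $(s,i)$, which is immediate since the gain depends only on $(s,i)$ and not on the target node $b$ (and is trivially so under the determinism assumption of Proposition~\ref{Prop:COmpleteModeDependent}, where such a pair fixes at most one edge).
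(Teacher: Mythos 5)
Your proposal is correct and matches the paper's intent exactly: the paper's own proof of this lemma simply states that it follows the same steps as the proof of Lemma~\ref{lem:MinCOnditionsWithShur} and omits the details, which are precisely the congruence-plus-Schur-complement manipulations you carry out with the mode-dependent substitution $K_{s,i}=\xbar{K}_{s,i}\xbar{P}_s^{-1}$. Your added remark on the index bookkeeping is a harmless (and sensible) verification, not a deviation.
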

\begin{proof}
The proof follows the same steps of the proof of Lemma~\ref{lem:MinCOnditionsWithShur} and is thus omitted.
\end{proof}

Similar to the Section~\ref{Sec:RobFeedback}, we are going to introduce below the equivalent LMI conditions for the class of De-Bruijn graphs of order $l\in \M$, denoted by $\cH^l(M)$, which are complete and deterministic, recall Definition~\ref{defn:DeBRunjii}.

\begin{cor}[De Bruijn: Mode-Dependent Case]\label{cor:DeBrunjiiGain} 
Consider $M\in \Zp$, a set $\cF=\{(A_i,B_i)\in \R^{n\times n}\times \R^{n\times m}\;\vert\;i\in \M\}$ and any $l\in \N$. Suppose there exist $\{\xbar{P}_{\wi} \}_{\wi\in \M^{l}}\subset \bS^{n\times n}$, $\{\xbar{K}_{\wi,h}\,\}_{\wi \in \M^{l}, h \in \M}\in \R^{m\times n}$ such that, $\forall\wi=(i_1,\dots, i_{l})\in \M^{l}$ and $\forall \,h\in \M$, the inequalities
\begin{equation}\label{eq:LMIConditionModeDependentBrunjii}
\begin{bmatrix}
\xbar{P}_{(h,\wi^-)} & (A_h\xbar{P}_{\wi}+B_h \xbar{K}_{\wi,h}) \\ \star & \xbar{P}_{\wi}\
\end{bmatrix}\succ 0,
\end{equation}
hold, where $\wi^-=(i_1,\dots, i_{l-1})\in \M^{l-1}$.
Then, the feedbacks maps $\Phi_h(x):={K}_{\gamma(x),\,h}\,x$, $x\in \R^n$, $h \in \M$, where $K_{\wi,h}= \xbar{K}_{\wi,h} \xbar{P}_{\wi}^{-1}$ exponentially stabilize system~\eqref{eq:SwitchedSystemInput} where $\gamma:\R^n\to \M^{l}$ satisfies
\begin{gather*}
    \gamma(x)\in\argmin_{\wi\in \M^{l}} \{x^\top {P}_{\wi}x\}
\end{gather*}
with ${P}_{\wi} = \xbar{P}^{-1}_{\wi}$ and for which the functions $\Phi_h:\R^n\to \R^m$ are piecewise linear. Moreover, $W(x):=\min_{\wi \in \M^{l}}\{x^\top P_{\wi} x\}$ is a Lyapunov function for the closed-loop system~\eqref{eq:modeDepSys}. 
\end{cor}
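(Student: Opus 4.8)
The plan is to obtain this statement as a direct specialization of the general mode-dependent machinery—Proposition~\ref{Prop:COmpleteModeDependent} and Lemma~\ref{lem:LMIConditionsWithShurPiecewiseModeDep}—to the particular graph $\cH^l(M)$, exactly mirroring the argument already used for Corollary~\ref{cor:DeBrunjiiRObust} in the robust case.

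First, I would verify that $\cH^l(M)$ satisfies the structural hypotheses of Proposition~\ref{Prop:COmpleteModeDependent}, namely that it is both \emph{complete} and \emph{deterministic}. Completeness is already recorded immediately after Definition~\ref{defn:DeBRunjii}. For determinism, I would note that, given a node $\wi=(i_1,\dots,i_l)$ and a label $h\in\M$, Definition~\ref{defn:DeBRunjii} forces the unique successor $\wj=(h,i_1,\dots,i_{l-1})=(h,\wi^-)$; hence for each pair $(\wi,h)$ there is \emph{exactly one} outgoing edge labeled $h$, which is precisely the condition of Definition~\ref{defn:DetGraph}. This uniqueness is also what makes the mode-indexed notation $K_{\wi,h}$ well-posed.

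Next, I would match the abstract edge set $E$ of $\cH^l(M)$ against the LMI template. Under the identification $a=\wi$, $i=h$, and $b=(h,\wi^-)$, the generic LMI~\eqref{eq:LMIConditionPiecewiseModeDep} of Lemma~\ref{lem:LMIConditionsWithShurPiecewiseModeDep} becomes exactly~\eqref{eq:LMIConditionModeDependentBrunjii}, quantified over all $\wi\in\M^l$ and $h\in\M$. Consequently, feasibility of~\eqref{eq:LMIConditionModeDependentBrunjii} is, by Lemma~\ref{lem:LMIConditionsWithShurPiecewiseModeDep}, equivalent to the existence of $\{P_{\wi}\}\subset\bS_+^{n\times n}$ and $\{K_{\wi,h}\}$ satisfying the mode-dependent inequalities~\eqref{eq:RobustConditionPiecewiseModeDep} on $\cH^l(M)$, with the stated change of variables $P_{\wi}=\xbar{P}_{\wi}^{-1}$ and $K_{\wi,h}=\xbar{K}_{\wi,h}\xbar{P}_{\wi}^{-1}$.

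Finally, I would invoke Proposition~\ref{Prop:COmpleteModeDependent}, which applies since $\cH^l(M)$ is complete and deterministic: the maps $\Phi_h(x)=K_{\gamma(x),h}\,x$ with $\gamma(x)\in\argmin_{\wi\in\M^l}\{x^\top P_{\wi}x\}$ are piecewise linear and exponentially stabilize~\eqref{eq:SwitchedSystemInput} in the mode-dependent sense, while $W(x)=\min_{\wi\in\M^l}\{x^\top P_{\wi}x\}$ is the associated Lyapunov function for~\eqref{eq:modeDepSys}. I do not anticipate any genuine obstacle; the statement is essentially a transcription of the general result to a concrete graph. The only point requiring care—and the sole difference from the robust Corollary~\ref{cor:DeBrunjiiRObust}—is the explicit verification of determinism, since the well-definedness of the gains $K_{\wi,h}$ (one gain per label at each node) hinges on it.
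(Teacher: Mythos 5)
Your proposal is correct and follows exactly the paper's route: specializing Proposition~\ref{Prop:COmpleteModeDependent} and Lemma~\ref{lem:LMIConditionsWithShurPiecewiseModeDep} to the De Bruijn graph $\cH^l(M)$, with the edge identification $a=\wi$, $i=h$, $b=(h,\wi^-)$. Your explicit check that $\cH^l(M)$ is deterministic (so the gains $K_{\wi,h}$ are well-posed) is a detail the paper only asserts in passing, but it is the same argument.
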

\begin{proof}
The proof is obtained by applying Proposition~\ref{Prop:COmpleteModeDependent} and Lemma~\ref{lem:LMIConditionsWithShurPiecewiseModeDep} to the graph $\cH^l(M)$ introduced in Definition~\ref{defn:DeBRunjii}.
 \end{proof}
\begin{rem}\textbf{\emph{(Relations with Gain-Scheduling Stabilization of LPV)}}\\
As highlighted in Remark~\ref{rem:RobustLPV}, the \emph{robust} stabilization problem for switched systems studied in Section~\ref{Sec:RobFeedback} is equivalent to robust stabilizability for LPV systems~\eqref{eq:DiscTimeLPV}. Unfortunately, this is not the case for the mode-dependent stabilization problem studied in this section. Indeed, in~\cite[Example 4.1]{BlaMiaSav07} it is shown that, given $M\in \Zp$ and a set $\cF=\{(A_i,B_i)\in \R^{n\times n}\times \R^{n\times m}\;\vert\;i\in \M\}$, mode-dependent stabilizability of~\eqref{eq:SwitchedSystemInput} is strictly weaker than mode-dependent (in this literature, a.k.a. gain-scheduling) stabilizability of~\eqref{eq:DiscTimeLPV}. Then, Proposition~\ref{Prop:COmpleteModeDependent} cannot be applied directly for~\eqref{eq:DiscTimeLPV}. 
On the other hand, in the particular case $B_1=\dots=B_M\in \R^{n\times m}$ i.e. when the input matrix does \emph{not} depend on the mode, mode-dependent stabilizability of~\eqref{eq:DiscTimeLPV} and mode-dependent stabilizability of~\eqref{eq:DiscTimeLPV} are indeed equivalent (see~\cite[Proposition 2]{BlaMiaSav07}), and thus Proposition~\ref{Prop:COmpleteModeDependent} can be applied to the (more general) class of LPV systems.\hfill $\triangle$
\end{rem}

\begin{rem}\textbf{\emph{(Comparison with Existing Results)}}\\
The LMIs conditions presented in Corollary~\ref{cor:DeBrunjiiGain} already appeared in~\cite{LeeKha09}, in a slightly different setting. Indeed, the authors of~\cite{LeeKha09} related the feasibility of the conditions with the existence of controllers relying on the knowledge of \emph{past switching} sequences, and thus it is required that the controller store the past active modes. On the other hand, we generalize the results in~\cite{LeeKha09} through Proposition~\ref{Prop:COmpleteModeDependent} and Lemma~\ref{lem:LMIConditionsWithShurPiecewiseModeDep}, where we rely only on the assumption that a graph is complete and deterministic to obtain LMI conditions assuring the existence of stabilizing piecewise linear controllers. Thus, more general graphs other than De Bruijn ones employed in Corollary~\ref{cor:DeBrunjiiGain} can be used too, potentially leading to more efficient results, given a particular system. On top of recovering the results of~\cite{LeeKha09} in a more general setting, the graph-theory approach used here allows us to provide piecewise linear controllers (along with min-of-quadratic Lyapunov functions) in a closed form without the necessity of observing and storing the past active modes. For a more formal discussion on the relations between graph-based stability conditions and conditions relying on past/future switching sequences, we refer to~\citep{DelRosJun22}.
We point out that in~\cite{LeeKha09} a negative result is proved: there exist mode-dependent feedback stabilizable systems of the form~\eqref{eq:modeDepSys} for which there does not exist a $l\in \N$ large enough such that the conditions in Corollary~\ref{cor:DeBrunjiiGain} are feasible.  We do not report the proof here; it can be found~\citep[Theorem 27 \& Example 28]{LeeKha09} and references therein.~\hfill $\triangle$
\end{rem}

\subsection{Example 5}
Consider a four-mode, third-order system studied in~\cite{BlaMia03} defined by $B_1 = B_2 = B_3 = B_4 = [0 \;\, 0 \;\, 0.3]^{\top}$ and
{
\begin{align*}
    &A_1 = \begin{bmatrix} 1 &.25 & 0 \\
    0.25 & 1 &-0.2 \\
    0 &0 &-0.16 \end{bmatrix},
    A_2 = \begin{bmatrix} 1 &.25 &0 \\
    0.25 &1 &-0.05 \\
    0 & 0 & 0.16\end{bmatrix},\\
    &A_3 = \begin{bmatrix} 1 & 0.32 & 0 \\
    0.32 & 1 & -0.05 \\
    0 & 0 & -0.16\end{bmatrix},
    A_4 = \begin{bmatrix}
        0.32 & 0 & 0 \\
    0.32 & 1  &-0.2 \\
    0 & 0& 0.16
    \end{bmatrix}.
\end{align*}}
In~\cite{BlaMia03}, by using techniques based on polyhedral Lyapunov functions, a stabilizing mode-dependent controller is provided,  also providing an upper-bound on the decay rate for the closed loop, given by $\overline \gamma=0.96$. Here we apply Corollary~\ref{cor:DeBrunjiiGain} with different graph orders $l$ to relate to the minimum upper-bound on the decay rate by using a similar modification to the LMIs~\eqref{eq:LMIConditionModeDependentBrunjii} as in Example~3. The results are illustrated in Table~\ref{tab:exampleDecayRateMD}, showing that an increase in the order graph $l$ allows smaller values of the decay rate $\gamma$, i.e., improving the convergence speed of the arising closed-loop. The decrease in theoretical conservatism comes at the cost of increasing the computational complexity, as an increase in $l$ also generates an increase in the number of variables of the LMIs in Corollary~\ref{cor:DeBrunjiiGain}. 


\begin{table}[h!]
\centering
\caption{Relation between minimum upper-bound on the decay rate $\gamma$ and graph order $l$ for Example~5.}
\begin{tabular}{l|lllll}
                           &\cellcolor{gray!20}   $l=0$ & \cellcolor{gray!20} $l=1$ & \cellcolor{gray!20} $l=2$ &\cellcolor{gray!20}  $l=3$ & \cellcolor{gray!20} $l=4$   \\ 
\hline
min $\gamma$ & 0.9156  & 0.9058
  & 0.9023  & 0.9009 & 0.9003
\end{tabular}\label{tab:exampleDecayRateMD}
\end{table}


\section{Conclusions}\label{sec:Conclu}
We presented a graph-based construction of piecewise linear feedback controllers for discrete-time switched linear systems. The chosen graph structure influences the resulting LMI conditions, providing the user with flexible conditions in order to manage both numerical complexity and theoretical conservatism. The proposed techniques were demonstrated both in the robust case, where no knowledge of the switching signal is available, and in a mode-dependent case, where partial knowledge is utilized. The general graph framework also allowed to recover and generalize several results that already appeared in the literature. Future work will examine the conservatism introduced by different graph structures and broaden the scope of the method to handle a wider range of switched or hybrid systems. This will include exploring nonlinear dynamics and more complex switching situations.



\bibliographystyle{apacite}        
\bibliography{autosam}

\end{document}